\theoremstyle{theorem}
\newtheorem{thm}{Theorem}[section]
\newtheorem{lem}[thm]{Lemma}
\newtheorem{prop}[thm]{Proposition}
\theoremstyle{remark}
\newtheorem{rmk}[thm]{Remark}
\newtheorem{ex}[thm]{Example}
\theoremstyle{definition}
\newtheorem{dfn}[thm]{Definition}
\begin{document}

\title[Cuntz-Pimsner Algebras of Tensor Products]
{Cuntz-Pimsner Algebras Associated to Tensor
Products of $C^*$-Correspondences}
\author{Adam Morgan}
\address{School of Mathematical and Statistical Sciences
\\Arizona State University
\\Tempe, Arizona 85287}
\email{anmorgan@asu.edu}
\date{\today}
\maketitle

\begin{abstract}

Given two $C^*$-correspondences $X$ and $Y$ over $C^*$-algebras $A$
and $B$, we show  that (under mild hypotheses) the Cuntz-Pimsner
algebra $\mathcal O_{X\otimes Y}$ embeds as a certain subalgebra of 
$\mathcal O_X\otimes\mathcal O_Y$ and that this subalgebra can be described
in a natural way in terms of the gauge actions on $\mathcal O_X$ and
$\mathcal O_Y$. We explore implications for graph algebras, crossed products
by $\mathbb Z$, crossed products by completely positive maps, and give a
new proof of a result of Kaliszewski and Quigg related to coactions on 
correspondences.
\end{abstract}

\section{Introduction}
In what follows, we will attempt to describe the Cuntz-Pimsner
algebra of an external tensor product $X\otimes Y$ of correspondences
in terms of the Cuntz-Pimsner algebras $\mathcal O_X$ and $\mathcal O_Y$.
In particular we will show that, under suitable conditions, $\mathcal O
_{X\otimes Y}$ is isomorphic to a certain subalgebra $\mathcal O_X\otimes_
\mathbb T\mathcal O_Y$ of $\mathcal O_X\otimes \mathcal O_Y$. We call this
subalgebra the \emph{$\mathbb T$-balanced tensor product} because it has
the
property that $\gamma^X_z(x)\otimes y=x\otimes\gamma^Y_z(y)$ for all $z\in
\mathbb T$, $x\in\mathcal O_X$ and $y\in\mathcal O_Y$, where $\gamma^X$ and
$\gamma^Y$ are the gauge actions on $\mathcal O_X$ and $\mathcal O_Y$.
This idea is inspired by a result of Kumjian in \cite{Kum} where it is shown
that for a cartesian product $E\times F=(E^0\times F^0,E^1\times F^1,r_E
\times r_F,s_E\times s_F)$ of two graphs, $C^*(E\times F)\cong C^*(E)
\otimes_\mathbb T C^*(F)$ where the balancing is over the gauge action of
the two graphs. Kumjian's proof uses the groupoid model of graph algebras
and is therefore independent of our main result. However, we will be able
to recover Kumjian's result for row finite graphs with no sources by 
considering the $C^*$-correspondence model
of a graph algebra. 

After proving our main result we will explore some examples
including a generalization of Kumjian's result
to the setting of topological graphs, implications for crossed products by
$\mathbb Z$, crossed product by a completely positive map, and we will
give a new proof of a theorem of Kaliszewski and Quigg which was used
in \cite{Quigg2} to study Cuntz-Pimsner algebras of coaction crossed
products.

In what follows we will assume all
tensor products are minimal (spatial) unless otherwise stated.
\section{Preliminaries}
Here we will review the material we will need on correspondences,
Cuntz-Pimsner algebras, actions and coactions.
A good general reference to the subject of Hilbert $C^*$-modules
and $C^*$-correspondences is \cite{Lance}. There is also an overview
given in \cite{Blackadar}.
For Cuntz-Pimsner algebras, we refer the reader to Katsura \cite{Katsura1} and to the overview given in
given in \cite{GraphAlgebraBook}. For actions and coactions, we refer
to appendix A of \cite{enchilada} as a general reference, but since
we will only be concerned with coactions of discrete groups we will also
use a lot of facts from \cite{Quigg}. We will begin
by reviewing the basics of $C^*$-correspondences.
\subsection{Correspondences}

Suppose $A$ is a $C^*$-algebra and $X$ is a right $A$-module.
By an \emph{$A$-valued inner product} on $X$ we shall mean
a map 
$$X\times X\ni(x,y)\mapsto\langle x,y\rangle_A\in A$$
which is linear in the second variable and such that
\begin{enumerate}
\item
$\langle x,x\rangle_A\geq 0$ for all $x\in X$ with equality only when $x=0$
\item
$\langle x, y\rangle^*_A=\langle y,x\rangle_A$ for all $x,y\in X$
\item
$\langle x,y\cdot a\rangle_A=\langle x,y\rangle_A\cdot a$ for all $x,y\in X$ 
and $a\in A$.
\end{enumerate}
Note that this implies that $\langle\cdot,\cdot\rangle_A$ is $A$-linear
in the second variable and conjugate $A$-linear in the first variable.
One can prove a version of the Cauchy Schwartz inequality for such $X$
which implies that we can define the following norm on $X$:
$$\|x\|_A:=\|\langle x,x\rangle_A\|^{\frac 1 2}$$

If $X$ is a right $A$-module with an $A$-valued inner product,
$X$ is called a \emph{right Hilbert $A$-module} if it is complete
under the norm $\|\cdot\|_A$ defined above.
Note that if $A=\mathbb C$ then $X$ is just a Hilbert space and
we can think of general Hilbert modules as Hilbert spaces whose scalars
are elements of some $C^*$-algebra $A$.

Also note that we can make $A$ itself into a right Hilbert $A$-module
by letting the right action of $A$ be given by multiplication in $A$ and
$A$-valued inner product given by $\langle a,b\rangle_A=a^*b$. We call this
the \emph{trivial} Hilbert $A$-module and denote it by $A_A$.

Let $A$ be a $C^*$-algebra and let 
$X$ be a right Hilbert $A$-module. If $T:X\to X$ is an
$A$-module homomorphism, then we call $T$ \emph{adjointable}
if there is an $A$-module homomorphism $T^*$ (called the 
\emph{adjoint} of $T$) such that
$$\langle Tx,y\rangle_A=\langle x, Ty\rangle_A$$
for all $x,y\in X$. The operator norm makes the set of all adjointable
operators on $X$ into a $C^*$-algebra which we denote by $\mathcal L(X)$.

Given $C^*$-algebras $A$ and $B$, an \emph{$A-B$-correspondence}
is right Hilbert $B$-module $X$ together with a map $\phi:A\to\mathcal L(X)$
which is called a \emph{left action of $A$ by adjointable operators}.
For $a\in A$ and $x\in X$, we will write $a\cdot x$ for $\phi(a)(x)$.
If $A=B$ we call this a \emph{correspondence over $A$} (or $B$).
We call the left-action \emph{injective} if $\phi$ is injective
and \emph{non-degenerate} if $\phi(A)X=X$.
We will sometimes write $_AX_B$ to indicate that $X$ in an $A-B$
correspondence.
Before we continue, we will give a few examples of correspondences.
\begin{ex}[Example 8.6 in \cite{GraphAlgebraBook}]
Let $A$ be a $C^*$ algebra and let $\alpha$ be an endomorphism of $A$.
We can make the trivial module $A_A$ into a correspondence over $A$ by defining
$\phi(a)(x)=\alpha(a)x$.
\end{ex}
\begin{ex}[Example 8.7 in \cite{GraphAlgebraBook}]\label{GraphCor}
Let $E=\{E^0,E^1,r,s\}$ be a directed graph (in the sense of 
\cite{GraphAlgebraBook}). Consider the vector space $c_c(E^1)$
of finitely supported functions on $E^1$. We can define a right
action of $c_0(E^0)$ and a $c_0(E^0)$-valued inner product as follows:
\begin{align*}
(x\cdot a)(e)&=x(e)a(s(e))\\
\langle x,y\rangle_{c_0(E^0)}(v)&=\sum_{\{e\in E^1:s(e)=v\}}\overline{x(e)}y(e)
\end{align*}
We can use this inner product to complete $c_c(E^1)$ into a right Hilbert
module $X(E)$. We can define a left action $\phi:c_0(E^0)\to \mathcal L(X(E))$
as follows
$$\phi(a)(x)(e)=a(r(e))x(e)$$
This makes $X(E)$ into a correspondence which is referred to as the 
\emph{graph correspondence} of $E$.
\end{ex}

This example has a natural generalization:
\begin{ex}[Topological graphs]
A \emph{topological graph} is a quadruple $E=\{E^0,E^1,r,s\}$
where $E^0$ and $E^1$ are locally compact Hausdorff spaces,
$r:E^1\to E^0$ is a continuous function, and $s:E^1\to E^0$
is a local homeomorphism. Let $A:=C_0(E^0)$. We can define
left- and right-actions of $A$ and an $A$-valued inner product on $C_c(E^1)$
similarly to the way we did for ordinary graphs (\cite{GraphAlgebraBook}
Chapter 9): For $a\in A$ and $x,y\in C_c(E^1)$, let
\begin{align*}
(a\cdot x)(e)&:=a(r(e))x(e)\\
(x\cdot a)(e)&:=x(e)a(s(e))\\
\langle x,y\rangle_A(v)&:=\sum_{\{e\in E^1:s(e)=v\}}\overline{x(e)}
y(e)
\end{align*}
We denote the completion of $C_c(E^1)$ under this inner product by
$X(E)$. It can be shown (\cite{GraphAlgebraBook} p.80-81) that the left action
is injective if and only if $r$ has dense range and the left action is
implemented by compacts if and only if $r$ is proper.
\end{ex}
Let $X$ be a Hilbert module over a $C^*$-algebra $A$. Then for any 
$x,y\in X$ the map $\Theta_{x,y}:z\mapsto x\cdot\langle y,z\rangle_A$
is an adjointable operator called a \emph{rank-one} operator.
It can be shown that the closed span of the rank-one operators forms an
ideal $\mathcal K(X)$ in $\mathcal L(X)$ which is referred to as the set
of \emph{compact operators}. If $X$ is an $A-B$-correspondence, we say
that the left action $\phi$ of $A$ is \emph{implemented by compacts}
if $\phi(A)\subseteq \mathcal K(X)$.

There are two types of tensor products which are usually defined on
correspondences, an ``internal'' tensor product and an ``external''
tensor product. These are defined as follows (see chapter 4 of \cite{Lance}
for more detail):
Let $_AX_B$ and $_CY_D$ be correspondences and let $\Phi:B\to C$ be a completely
positive map (see \cite{Blackadar} for the basics of completely positive
maps). Let $X\odot_\Phi Y$ be the quotient of the algebraic tensor product
$X\odot Y$ by the subspace spanned by
$$\{x\cdot b\otimes y-x\otimes\Phi(b)\cdot y:x\in X, y\in Y, b\in B\}$$
This is a right $D$-module with right action given by $(x\otimes y)\cdot
d=(x\otimes y\cdot d)$. We define a $D$-valued inner product as follows:
$$\langle x\otimes y, x'\otimes y'\rangle=\langle y,\Phi\big(\langle x,x'\rangle_B
\big)\cdot y'\rangle_D$$
We refer to the completion of $X\odot_\Phi Y$ with respect to this inner
product as the \emph{internal tensor product} of $X$ and $Y$ and it is
denoted by $X\otimes_\Phi Y$. If $\phi_A:A\to\mathcal L(X)$ is the left
action on $X$, then we can define a left action of $A$ on $X\otimes_\Phi
Y$ by $\phi(a)(x\otimes y)=\big(\phi_A(a)x\big)\otimes y$. This makes
$X\otimes_\Phi Y$ an $A-D$ correspondence.
In many situations we will have $B=C$ and
$\Phi=id_B$. In this case we will write the associated internal tensor product
as $X\otimes_B Y$.
\begin{ex}\label{CPCor}
If $\Phi:A\to B$ is a completely positive map between two $C^*$-algebras,
then we define the \emph{correspondence associated to $\Phi$} to be
the correspondence $X_\Phi:=\tensor[_A]A{_A}\otimes_\Phi\tensor[_B]B{_B}$
where $\tensor[_A]A{_A}$ and
$\tensor[_B]B{_B}$ are the standard correspondences.
\end{ex}
Let $_A X_B$ and $_CY_D$ be correspondences. We can define a
right action of $B\otimes D$ and a $B\otimes D$-valued inner
product on the algebraic tensor product $X\odot Y$ as follows
\begin{align*}
(x\otimes y)\cdot(a\otimes b)&=(x\cdot a)\otimes (y\cdot b)\\
\langle x\otimes y,x'\otimes y'\rangle&=\langle x,x'\rangle_B
\otimes\langle y,y'\rangle_D
\end{align*}
The completion of $X\odot Y$ with respect to this inner product
is called the \emph{external tensor product} of $X$ and $Y$ which
we will denote simply by $X\otimes Y$.
We can define a left action of $A\otimes C$ as follows:
$\phi(a\otimes c)(x\otimes y)=\big(\phi_A(a)x\big)\otimes\big(\phi_C(c)y\big)$.
This makes $X\otimes Y$ an $A\otimes C-B\otimes D$ correspondence.

If $\Phi:A\to C$ and $\Phi':B\to D$ are completely positive maps,
then $\Phi\otimes\Phi':A\otimes B\to C\otimes D$ will be a completely 
positive map as well. In fact:
\begin{lem}
Let $_AX_{A'}$, $_BY_{B'}$, $_CZ_{C'}$, and $_DW_{D'}$ be correspondences
and let $\Phi:A'\to C$ and $\Phi':B'\to D$ be completely positive maps.
Then
$$(X\otimes_\Phi Z)\otimes(Y\otimes_{\Phi'}W)\cong(X\otimes Y)\otimes_{\Phi\otimes\Phi'}
(Z\otimes W)$$
\end{lem}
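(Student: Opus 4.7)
The plan is to define the isomorphism on elementary tensors via
$$\Psi\bigl((x\otimes z)\otimes(y\otimes w)\bigr) := (x\otimes y)\otimes(z\otimes w)$$
and to show it descends from the algebraic pre-tensor product to a well-defined, isometric, $(A\otimes B)$--$(C'\otimes D')$-bimodule map with dense range. Both sides are manifestly $(A\otimes B)$--$(C'\otimes D')$-correspondences once the types are unwound, so everything lives in the same category.

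The core computation is that $\Psi$ preserves the $(C'\otimes D')$-valued inner products; once this is checked, isometry on the pre-completion forces well-definedness modulo the null subspace (which is precisely what gives each internal tensor product its balancing relation), and then $\Psi$ extends uniquely to the completion. On the LHS, expanding one external and two internal tensor products gives
$$\langle z,\Phi(\langle x,x'\rangle_{A'})\cdot z'\rangle_{C'}\otimes\langle w,\Phi'(\langle y,y'\rangle_{B'})\cdot w'\rangle_{D'}.$$
On the RHS, expanding one internal and two external tensor products and using the elementary identity $(\Phi\otimes\Phi')(a'\otimes b') = \Phi(a')\otimes\Phi'(b')$ together with $(p\otimes q)\cdot(z'\otimes w')=(p\cdot z')\otimes(q\cdot w')$ yields the same element of $C'\otimes D'$.

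With isometry established, verifying compatibility with the left action of $A\otimes B$ and the right action of $C'\otimes D'$ is a direct computation from the definitions of internal and external tensor products: both actions are given on elementary tensors by acting in the obvious factor, and $\Psi$ permutes factors in a way that respects this. Density of the image is immediate because $\Psi$ hits every elementary tensor $(x\otimes y)\otimes(z\otimes w)$, and the span of these is dense in the RHS.

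The main obstacle is essentially careful bookkeeping. Four modules, four pairs of coefficient algebras, two completely positive maps, and a mixture of internal and external tensor constructions leave ample room for misplaced indices, but there is no deep technical difficulty once $\Psi$ is written down and the inner product identity above is verified.
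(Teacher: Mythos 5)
Your proposal is correct and follows essentially the same route as the paper: define the factor-swapping map on elementary tensors, verify that it preserves the pre-inner products by expanding both sides to $\langle z,\Phi(\langle x,x'\rangle_{A'})\cdot z'\rangle_{C'}\otimes\langle w,\Phi'(\langle y,y'\rangle_{B'})\cdot w'\rangle_{D'}$, and extend by continuity to an isomorphism of the completions. Your additional remarks on bimodule compatibility and density fill in details the paper leaves implicit, but the key computation is identical.
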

\begin{proof}
Note that the left hand side is the completion of 
$X\odot Z\odot Y\odot W$ under a ceratin pre-inner product and
the right hand side is the completion of $X\odot Y\odot Z\odot W$ under
a certain pre-inner product. We can show that the linear map
\begin{align*}
\sigma_{23}:X\odot Z\odot Y\odot Z&\to X\odot Y\odot Z\odot W\\
x\otimes z\otimes y\otimes z&\mapsto x\otimes y\otimes z\otimes w
\end{align*}
extends to a correspondence isomorphism
$$(X\otimes_\Phi Z)\otimes(Y\otimes_{\Phi'}W)
\to(X\otimes Y)\otimes_{\Phi\otimes\Phi'}(Z\otimes W)$$
by showing that $\sigma_{23}$ preserves the pre-inner products. By linearity
it suffices to show this for elementary tensors. Let
$\langle\cdot,\cdot\rangle_1$ denote the pre-inner product which gives
rise to $(X\otimes_\Phi Z)\otimes(Y\otimes_{\Phi'}W)$.
Let $\langle\cdot,\cdot\rangle_2$ denote the pre-inner product which gives
rise to $(X\otimes Y)\otimes_{\Phi\otimes\Phi'}(Z\otimes W)$. Then
\begin{align*}
\big\langle\sigma_{23}(x\otimes z\otimes y\otimes w),&\sigma_{23}
(x\otimes z\otimes y\otimes w)\big\rangle_2\\
&=\langle x\otimes y\otimes z\otimes w,x'\otimes y'\otimes z'\otimes w'\rangle_2\\
&=\Big\langle z\otimes w,(\Phi\otimes\Phi')\big(\langle x\otimes y,x'\otimes
y'\rangle\big)(z'\otimes w')\Big\rangle\\
&=\Big\langle z\otimes w,(\Phi\otimes \Phi')\big(\langle x,x'\rangle\otimes\langle
y,y'\rangle\big)(z'\otimes w')\Big\rangle\\
&=\Big\langle z\otimes w,\big(\Phi\big(\langle x,x'\rangle\big)z'\big)\otimes
\big(\Phi'\big(\langle y,y'\rangle\big)w'\big)\Big\rangle\\
&=\Big\langle z,\Phi\big(\langle x,x'\rangle\big)z'\Big\rangle\otimes\Big\langle
w,\Phi'\big(\langle y,y'\rangle\big)w'\Big\rangle\\
&=\langle x\otimes z\otimes y\otimes w, x'\otimes z'\otimes y'\otimes z'\rangle_1
\end{align*}
Thus $\sigma_{23}$ extends to an isomorphism giving us 
$$(X\otimes_\Phi Z)\otimes(Y\otimes_{\Phi'}W)\cong (X\otimes Y)\otimes_{\Phi\otimes
\Phi'}(W\otimes Z)$$
\end{proof}
\begin{ex}\label{TensorProductOfCPMap}
Let $\Phi:A\to C$ and $\Phi':B\to D$ be completely positive maps.
Since $\tensor[_A]A{_A}\otimes\tensor[_B]B{_B}=\tensor[_{A\otimes B}]{(A\otimes
B)}{_{A\otimes B}}$ and 
$\tensor[_C]C{_C}\otimes\tensor[_D]D{_D}={\tensor[_{C\otimes D}]{(C\otimes
D)}{_{C\otimes D}}}$
We have:
\begin{align*}
X_{\Phi\otimes\Phi'}&=\left(\tensor[_{A\otimes B}]{(A\otimes
B)}{_{A\otimes B}}\right)\otimes_{\Phi\otimes\Phi'}\left({\tensor
[_{C\otimes D}]{(C\otimes D)}{_{C\otimes D}}}\right)\\
&=\big(\tensor[_A]A{_A}\otimes\tensor[_B]B{_B}\big)\otimes_{\Phi\otimes\Phi'}
\big(\tensor[_C]C{_C}\otimes\tensor[_D]D{_D}\big)
\end{align*}
Applying the preceding lemma gives:
\begin{align*}
&\cong\big(\tensor[_A]A{_A}\otimes_\Phi\tensor[_C]C{_C}\big)\otimes\big(
\tensor[_B]B{_B}\otimes_{\Phi'}\tensor[_D]D{_D}\big)\\
&=X_\Phi\otimes X_{\Phi'}
\end{align*}
Thus $X_{\Phi\otimes\Phi'}\cong X_\Phi\otimes X_{\Phi'}$.
\end{ex}

The following facts (see Chapter 4 of \cite{Lance}) will be useful
in proving our main result:
\begin{lem}\label{TensorOfCompacts}
Let $X$and $Y$ be $C^*$-correspondences over $C^*$-algebras $A$ 
and $B$
respectively. Then $\mathcal K(X\otimes^{ext}Y)\cong\mathcal K(X)\otimes
\mathcal K(Y)$ via the map $\kappa$ which takes 
$S\otimes T\in\mathcal K(X)\otimes 
\mathcal K(Y)$ to the linear map $x\otimes y\mapsto Sx\otimes Ty$. 
Further, if the left actions of $A$ and $B$ are injective
and implemented by compacts then so is the left action of $A\otimes B$ on
$X\otimes^{ext}Y$. 
\end{lem}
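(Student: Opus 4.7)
The plan is to construct $\kappa$ first at the algebraic level on $\mathcal K(X) \odot \mathcal K(Y)$ and then extend to the spatial tensor product by continuity. The key computation is the rank-one identity
$$\Theta_{x \otimes y,\, x' \otimes y'}(x'' \otimes y'') = \Theta_{x,x'}(x'') \otimes \Theta_{y,y'}(y''),$$
which follows directly from the definition of the $A \otimes B$-valued inner product on $X \otimes Y$. This shows that $\kappa$ sends $\Theta_{x,x'} \otimes \Theta_{y,y'}$ to $\Theta_{x \otimes y,\, x' \otimes y'}$, and hence that on the algebraic tensor product, $\kappa$ restricts to a bijection onto the algebraic span of rank-one operators on $X \otimes Y$. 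A short computation with the composition and adjoint formulas for rank-ones shows $\kappa$ is a $*$-homomorphism at the algebraic level.

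The main obstacle is extending $\kappa$ to an isometric $*$-isomorphism on the spatial tensor product. For continuity, $\kappa$ is a $*$-homomorphism from the algebraic tensor product into the $C^*$-algebra $\mathcal L(X \otimes Y)$; any such map is bounded with respect to every $C^*$-norm on the algebraic tensor product, in particular the minimal one, so it extends to a $*$-homomorphism on $\mathcal K(X) \otimes \mathcal K(Y)$. For injectivity, I would choose faithful Hilbert space representations of $\mathcal K(X)$ and $\mathcal K(Y)$ (obtained for instance from faithful states on $A$ and $B$ via the interior tensor product construction), verify that the resulting tensor product representation is faithful on $\mathcal K(X) \otimes \mathcal K(Y)$ by definition of the spatial norm, and observe that it factors through $\kappa$ composed with the analogous representation of $\mathcal L(X \otimes Y)$ on the tensor product Hilbert space. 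Surjectivity follows from continuity together with the fact that the image of $\kappa$ already contains the algebraic span of rank-one operators on $X \otimes Y$, which is dense in $\mathcal K(X \otimes Y)$ by definition.

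For the second assertion, suppose $\phi_A : A \to \mathcal L(X)$ and $\phi_B : B \to \mathcal L(Y)$ are both injective and take values in the compact operators. Then the left action of $A \otimes B$ on $X \otimes Y$ is $\phi_A \otimes \phi_B$, which by hypothesis takes values in $\mathcal K(X) \otimes \mathcal K(Y)$; composing with the isomorphism $\kappa$ identifies this as a map into $\mathcal K(X \otimes Y)$, so the left action is implemented by compacts. Injectivity is then immediate because the spatial tensor product of injective $*$-homomorphisms is again injective, and $\kappa$ is an isomorphism.
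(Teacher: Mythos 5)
The paper itself gives no proof of this lemma; it is quoted as a known fact from Chapter 4 of \cite{Lance}, so there is nothing internal to compare with, and your outline is essentially the standard Lance-style argument: the rank-one identity $\kappa(\Theta_{x,x'}\otimes\Theta_{y,y'})=\Theta_{x\otimes y,\,x'\otimes y'}$, extension, and then injectivity/surjectivity. One step, however, is wrong as written: you claim that a $*$-homomorphism from the algebraic tensor product $\mathcal K(X)\odot\mathcal K(Y)$ into a $C^*$-algebra is automatically bounded for \emph{every} $C^*$-norm, ``in particular the minimal one.'' Such a map is automatically contractive only for the \emph{maximal} norm; since every $C^*$-norm on the algebraic tensor product is dominated by $\|\cdot\|_{\max}$, contractivity for the maximal norm says nothing about the minimal one. (Concretely, when $\|\cdot\|_{\min}\neq\|\cdot\|_{\max}$ the identity map $A\odot B\to A\otimes_{\max}B$ is a $*$-homomorphism that is unbounded for the minimal norm, otherwise the two completions would coincide.) So the continuity of $\kappa$ for the spatial norm cannot be obtained this way. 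A second, smaller slip: the algebraic image of $\kappa$ is the span of rank-ones $\Theta_{\xi,\eta}$ with $\xi,\eta$ \emph{elementary} tensors, not of all rank-ones on $X\otimes Y$; this still suffices for surjectivity because such operators have dense span in $\mathcal K(X\otimes Y)$ and the image of the extended $*$-homomorphism is closed.

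The good news is that your own injectivity argument repairs the continuity gap. Localize $X$ and $Y$ at faithful \emph{representations} of $A$ and $B$ (preferable to faithful states, which need not exist outside the $\sigma$-unital/separable setting): this gives faithful representations of $\mathcal L(X)$ and $\mathcal L(Y)$, hence of $\mathcal K(X)$ and $\mathcal K(Y)$, on $X\otimes_A H_A$ and $Y\otimes_B H_B$. The localization of $X\otimes Y$ at the tensor product representation of $A\otimes B$ (which is faithful) is unitarily equivalent to $(X\otimes_A H_A)\otimes(Y\otimes_B H_B)$, and this unitary intertwines $\kappa(S\otimes T)$ with the Hilbert-space operator $S\otimes T$. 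Consequently $\kappa$ is \emph{isometric} on $\mathcal K(X)\odot\mathcal K(Y)$ for the minimal norm, which yields the bounded extension, injectivity, and (with the density remark above) surjectivity all at once. With the continuity step rerouted through this localization argument, your proof is correct; the final assertion is fine as you state it, since the left action of $A\otimes B$ on $X\otimes Y$ is exactly $\kappa\circ(\phi_A\otimes\phi_B)$, which is injective and takes values in $\mathcal K(X\otimes Y)$ when $\phi_A,\phi_B$ are injective with compact values.
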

Before we end this section, we will give another example of an external tensor
product:

\begin{ex}\label{TensorTopGraph}
Let $E=\{E^0,E^1,r,s\}$ and $F:=\{E^0,E^1,r',s'\}$ 
be topological graphs. Define 
$$E\times F:=\{E^0\times F^0,E^1\times F^1,r\times r',s\times s'\}$$
(i.e. the topological analog of the product graph in \cite{Kum}).
Since the product of two continuous maps is continuous and the product
of two local homeomorphisms is a local homeomorphism, $E\times F$ is 
a topological graph. Let $\rho:C_0(E^0)\otimes C_0(F^0)\to C_0(E^0\times
F^0)$ and $\sigma:C_c(E^1)\otimes C_c(F^1)\to C_c(E^1\times F^1)$
be the standard isomorphisms. Note the following:
\begin{align*}
\sigma\big((a\otimes b)\cdot(x\otimes y)\big)(e\times f)&=\phi(a\cdot x
\otimes b\cdot y)(e\times f)\\
&=a\big(r(e)\big)x(e)b\big(r'(f)\big)y(f)\\
&=a\big(r(e)\big)b\big(r'(f)\big)x(e)y(f)\\
&=\rho(a\otimes b)\big(r(e)\times r'(f)\big)\sigma(x\otimes y)(e\times f)\\
\end{align*}
Similarly, 
$$\sigma\big((x\otimes y)\cdot(a\otimes b)\big)(e\times f)=
\sigma(x\otimes y)(e\times f)\rho(a\otimes b)\big(s(e)\times s'(f)\big)$$
Let $A=C_0(E^0)$ and $B=C_0(F^0)$. If $\langle\cdot,\cdot\rangle_1$ is the
$A\otimes B$-valued
tensor product associated to the external tensor product $X(E)\otimes X(F)$
and $\langle\cdot,\cdot\rangle_2$ is the $C_0(E\times F)$ valued inner product
associated to $X(E\times F)$ then:
\begin{align*}
\rho\big(\langle x\otimes y,x'\otimes y'\rangle_1\big)(v\times
w)&=\rho\big(\langle x,x'\rangle_A\otimes\langle y,y'\rangle_B\big)
(v\times w)\\
&=\langle x,x'\rangle_A(v)\langle y,y'\rangle_B(w)\\
&=\sum_{s(e)=v, s'(f)=w}\overline {x(e)}x'(e)\overline{y(f)}y'(f)\\
&=\sum_{s(e)=v, s'(f)=w}\overline {x(e)y(f)}x'(e)y'(f)\\
&=\sum_{s(e)\times s'(f)=v\times w}\overline {\sigma(x\otimes y)(e\times
f)}
\sigma(x'\otimes y')(e\times f)\\
&=\langle\sigma(x\otimes y),\sigma(x'\otimes y')\rangle_2(v\times w)
\end{align*}
Extending linearly and continuously, we see that $(\rho,\sigma)$ gives
an isomorphism of correspondences: $X(E\times F)\cong X(E)\otimes X(F)$.
\end{ex}
\subsection{Cuntz-Pimsner Algebras}
In order to describe the 
Cuntz-Pimsner algebra of a correspondence, we will
first need to discuss representations of correspondences.
Given a correspondence $X$ over a $C^*$-algebra $A$, a \emph{Toeplitz
Representation} of $X$ is a $C^*$ algebra $B$ is a pair $(\psi,\pi)$
where $\psi:X\to B$ is a linear map and $\pi:A\to B$ is a $*$-homomorphism
such that
\begin{enumerate}
\item
$\psi(a\cdot x)=\pi(a)\psi(x)$ for all $a\in A$ and all $x\in X$
\item
$\psi(x\cdot a)=\psi(x)\pi(a)$ for all $a\in A$ and all $x\in X$
\item
$\pi\big(\langle x,y_A\rangle\big)=\psi(x)^*\psi(y)$ for all $x,y\in X$
\end{enumerate}
(Note that 2 is actually implied by 3 but we include it for clarity.)
We will write $C^*(\psi,\pi)$ for the $C^*$-subalgebra of $B$ generated
by the images of $\psi$ and $\pi$ in $B$.
It can be shown that there is a unique (up to isomorphism)
$C^*$-algebra $\mathcal T_X$, called
the \emph{Toeplitz algebra} of $X$, which is generated by a representation
$(i_X,i_A)$ which is ``universal'' in the sense that for any representation
$(\psi,\pi)$ of $X$ in any $C^*$-algebra $B$, there is a unique $*$-homomorphism
$\psi\times\pi:\mathcal T_X\to B$ such that $\psi=(\psi\times\pi)\circ i_A$
and $\pi=(\psi\times\pi)\circ i_X$.
Let $X^{\otimes n}$ denote the n-fold internal tensor product of $X$
with itself by convention we let $X^{\otimes 0}=A$. Given a Toeplitz
representation $(\psi,\pi)$ of $X$ in $B$, we define a map 
$\psi^n:X^{\otimes n}\to B$ for each $n\in \mathbb N$ as follows:
We let $\psi^0=\pi$ and $\psi^1=\psi$ and then for each $n>1$ we
set $\psi^n(x\otimes y)=\psi(x)\psi^{n-1}(y)$ where $x\in X$ and $y\in 
X^{\otimes n-1}$.

Proposition 2.7 of \cite{Katsura1} states the following:
Let $X$ be correspondence over $A$ and let $(\psi,\pi)$ be
a Toeplitz representation of $X$. Then
$$C^*(\psi,\pi)=\overline{span}\{\psi^n(x)\psi^m(y)^*:x\in X^{\otimes n},
y\in X^{\otimes m}\}$$

From Lemma 2.4 of \cite{Katsura1} we get the following:
Let $(\psi,\pi)$ be a Toeplitz representation of $X$ in $B$.
For each $n\in\mathbb N$ there is a homomorphism $\psi^{(n)}:
\mathcal K(X^{\otimes n})\to B$ such that:
\begin{enumerate}
\item
$\pi(a)\psi^{(n)}(k)=\psi^{(n)}(\phi(a)k)$ for all $a\in A$ and all $k\in
\mathcal K(X^{\otimes n})$.
\item
$\psi^{(n)}(k)\psi(x)=\psi(kx)$ for all $x\in X$ and all $k\in \mathcal 
K(X^{\otimes n})$
\end{enumerate}
Let $X$ be a correspondence over a $C^*$-algebra $A$. We define
the \emph{Katsura ideal} of $A$ to be the ideal:
$$J_X=\{a\in A:\phi(a)\in \mathcal K(X) \text{ and }ab=0\text{ for all
}b\in ker(\phi)\}$$
Where $\phi$ is the left action. This is often written more compactly
as $J_X=\phi^{-1}\big(\mathcal K(X)\big)\cap\big(ker(\phi)\big)^\perp$. 
In many cases of interest, one can consider only
correspondences whose left actions are injective and implemented by compacts.
In this case we have $J_X=A$.

The Katsura ideal is also sometimes described as the largest ideal of $A$
which maps injectively onto the compacts. This is made precise by the following
proposition (see \cite{Katsura1}):
\begin{prop}\label{CharacterizationOfKatIdeal}
Suppose $X$ is a correspondence over a $C^*$-algebra $A$, $\phi$ is the 
left action map, and $I$ is an ideal of $A$ which is mapped injectively
into $\mathcal K(X)$ by $\phi$. Then $I\subseteq J_X$.
\end{prop}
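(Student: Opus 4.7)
The plan is to verify the two defining conditions of $J_X$ for an arbitrary element $a \in I$. Recall that $J_X = \phi^{-1}(\mathcal K(X)) \cap \ker(\phi)^\perp$, so I need to show (a) $\phi(a) \in \mathcal K(X)$, and (b) $ab = 0$ for every $b \in \ker(\phi)$.

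Condition (a) is immediate: by hypothesis, $\phi$ maps $I$ into $\mathcal K(X)$, so $\phi(a) \in \mathcal K(X)$ for every $a \in I$. The real content is in condition (b), and here the idea is to exploit the interaction between $I$ being an ideal and $\phi|_I$ being injective. Given $a \in I$ and $b \in \ker(\phi)$, the product $ab$ lies in $I$ because $I$ is a (two-sided) ideal of $A$. At the same time, $\phi(ab) = \phi(a)\phi(b) = 0$ since $b \in \ker(\phi)$. Thus $ab$ is an element of $I$ in the kernel of $\phi$, and the injectivity of $\phi$ restricted to $I$ forces $ab = 0$.

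Putting these together shows $a \in \phi^{-1}(\mathcal K(X))$ and $a \in \ker(\phi)^\perp$, hence $a \in J_X$, giving the desired inclusion $I \subseteq J_X$.

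There isn't really a hard step here; the statement is essentially a direct consequence of the definition of $J_X$ combined with the ideal property. The only subtlety worth noting is that one is using that $I$ is a two-sided ideal (so that $ab$ remains in $I$ when $a \in I$ and $b$ is an arbitrary element of $A$), together with the fact that $\phi$ is a $*$-homomorphism so that $\phi(ab) = \phi(a)\phi(b)$. Everything else is a short diagram chase through the definitions.
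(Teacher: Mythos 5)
Your proof is correct: both conditions defining $J_X=\phi^{-1}\big(\mathcal K(X)\big)\cap\big(\ker\phi\big)^\perp$ are verified cleanly, with the ideal property of $I$ and injectivity of $\phi|_I$ doing exactly the work needed to show $ab=0$ for $b\in\ker\phi$. The paper itself gives no proof (it cites Katsura), and your direct argument from the definition is precisely the standard one, so there is nothing to add.
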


We are now ready to define the Cuntz-Pimsner algebra $\mathcal O_X$.
A Toeplitz representation is said to be \emph{Cuntz-Pimsner covariant}
if $\psi^{(1)}\big(\phi(a)\big)=\pi(a)$ for all $a\in J_X$. The 
\emph{Cuntz-Pimsner algebra} $\mathcal O_X$ is the quotient of 
$\mathcal T_X$ by the ideal generated by 
$$\{i_X^{(1)}\big(\phi(a)\big)-i_A(a):a\in J_X\}$$
Letting $q:\mathcal T_X\to\mathcal O_X$ denote the quotient map,
it can be shown that $\mathcal O_X$ is generated by the Cuntz-Pimsner
covariant representation $(k_X,k_A)={(q\circ i_X, q\circ i_A)}$ and
that this representation is universal for Cuntz-Pimsner covariant
representations: if $(\psi,\pi)$ is a Cuntz-Pimsner covariant
representation of $X$ in $B$, then there is a $*$-homomorphism
$\psi\times\pi:\mathcal O_X\to B$ such that $\psi=(\psi\times\pi)\circ
k_X$ and $\pi=(\psi\times\pi)\circ k_A$.

\begin{ex}
If $E$ is a directed graph and $X(E)$ is the graph correspondence of 
Example \ref{GraphCor}, then one can show that $\mathcal O_{X(E)}\cong C^*(E)$
where $C^*(E)$ is the $C^*$-algebra of the graph (see \cite{GraphAlgebraBook}
for details). For this reason, if $F$ is a topological graph, the graph algebra
of $F$ is defined to be $\mathcal O_{X(F)}$. 
\end{ex}
One of the most important results about Cuntz-Pimsner algebras is the
so-called ``gauge-invariant uniqueness theorem''. In order to state this
theorem we need the following definition:
Let $(\psi,\pi)$ be a Toeplitz representation of a correspondence $X$.
Then we say that $C^*(\psi,\pi)$ \emph{admits a gauge action} if there
is an action $\gamma$ of $\mathbb T$ on $C^*(\psi,\pi)$ such that:
\begin{enumerate}
\item
$\gamma_z\big(\pi(a)\big)=\pi(a)$ for all $z\in \mathbb T$ and all $a\in
A$
\item
$\gamma_z\big(\psi(x)\big)=z\psi(x)$ for all $z\in\mathbb T$ and all
$x\in X$
\end{enumerate}
When such an action exists, it is unique.

\begin{thm}[``Gauge Invariant Uniqueness Theorem'' (6.4 in \cite{Katsura1})]
Let $X$ be a correspondence over $A$ and let $(\psi,\pi)$ be a Cuntz-Pimsner
covariant representation of $X$. Then the $*$-homomorphism
$\psi\times\pi:\mathcal O_X\to C^*(\psi,\pi)$ is an isomorphism if and
only if $(\psi,\pi)$ is injective and admits a gauge action.
\end{thm}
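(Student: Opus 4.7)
The plan is to prove both implications, with the forward direction being a straightforward transfer of properties and the reverse direction requiring a careful averaging argument.

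For ($\Rightarrow$), I would first establish that $\mathcal O_X$ itself carries a gauge action and that $k_A$ is injective. For the gauge action, for each $z\in\mathbb T$ the pair $(zk_X,k_A)$ is a Toeplitz representation (the axioms are preserved because axiom 3 pairs a $z$ with a $\bar z$), and it is Cuntz-Pimsner covariant because $(zk_X)^{(1)}(\Theta_{x,y}) = zk_X(x)(zk_X(y))^* = k_X^{(1)}(\Theta_{x,y})$ on rank-one operators; by universality this yields a $*$-homomorphism $\gamma_z:\mathcal O_X\to\mathcal O_X$, and the group laws follow from uniqueness. Injectivity of $k_A$ is a standard consequence of the Fock representation (\cite{Katsura1}). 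If $\psi\times\pi$ is an isomorphism, both properties immediately transfer to $(\psi,\pi)$.

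For the substantive direction ($\Leftarrow$), set $\Phi = \psi\times\pi$; this is surjective by construction, so it suffices to show $\ker\Phi = 0$. Let $\gamma$ be the gauge action on $\mathcal O_X$ and $\beta$ the assumed gauge action on $C^*(\psi,\pi)$. Checking on the generators $k_A(a)$ and $k_X(x)$ shows $\Phi\circ\gamma_z = \beta_z\circ\Phi$ for every $z$, so $\Phi$ is $\mathbb T$-equivariant. Define faithful conditional expectations $E:\mathcal O_X\to\mathcal O_X^\gamma$ and $E':C^*(\psi,\pi)\to C^*(\psi,\pi)^\beta$ by averaging, $E(a) = \int_\mathbb T \gamma_z(a)\,dz$ and similarly for $E'$; faithfulness is the standard fact for averaging compact group actions. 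Equivariance gives $\Phi\circ E = E'\circ\Phi$, so if $\Phi(a) = 0$ then $\Phi(E(a^*a)) = E'(\Phi(a^*a)) = 0$, and provided $\Phi$ is injective on the fixed-point algebra $\mathcal O_X^\gamma$, this forces $E(a^*a) = 0$, hence $a = 0$ by faithfulness of $E$.

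It remains to establish that $\Phi|_{\mathcal O_X^\gamma}$ is injective, which is the main obstacle. Following the standard dissection, $\mathcal O_X^\gamma$ is the closure of an increasing union $\bigcup_n \mathcal F_n$ of ``core'' subalgebras, where $\mathcal F_n$ is the closed span of products $k_X^n(x)k_X^n(y)^*$ for $x,y\in X^{\otimes k}$ with $k\leq n$, and the inclusions $\mathcal F_n\hookrightarrow \mathcal F_{n+1}$ come from Cuntz-Pimsner covariance. I would induct on $n$: the base case $n=0$ reduces to injectivity of $\pi$ on $k_A(A)\cong A$. The inductive step is the delicate one: using the isomorphism $\mathcal K(X^{\otimes (n+1)})\cong \mathcal F_{n+1}/\mathcal F_n$ (or more precisely, a suitable extension description), one must show that any element of $\mathcal F_{n+1}$ which $\Phi$ kills must come from $\mathcal F_n$ via the covariance identification, and apply the inductive hypothesis. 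The key leverage here is Proposition \ref{CharacterizationOfKatIdeal}: the ambiguity produced by the covariance relation is controlled precisely by $J_X$, on which $\phi$ is injective into $\mathcal K(X)$, so the restriction of $\Phi$ to the ``new'' compacts at level $n+1$ inherits injectivity from the previous level. A continuity argument then extends injectivity from $\bigcup_n \mathcal F_n$ to its closure, completing the proof.
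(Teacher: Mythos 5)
The paper does not prove this theorem at all: it is quoted verbatim from Katsura (Theorem 6.4 of \cite{Katsura1}), so there is no in-paper argument to compare against, and your proposal has to be judged as a proof of Katsura's theorem on its own. The overall architecture you use is the standard one, and much of it is sound: the forward direction (existence of the gauge action on $\mathcal O_X$ by universality applied to $(zk_X,k_A)$, injectivity of $k_A$ via the Fock representation, transfer through the isomorphism) is fine, as is the reduction in the converse direction via equivariance of $\Phi$, the averaged conditional expectations $E$, $E'$, faithfulness of $E$, and the resulting reduction of injectivity of $\Phi$ to injectivity of $\Phi|_{\mathcal O_X^\gamma}$.

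The genuine gap is the inductive step on the core, which is the actual content of the theorem and is only asserted, not proved. First, the identification $\mathcal K(X^{\otimes(n+1)})\cong\mathcal F_{n+1}/\mathcal F_n$ you lean on is false in general: already for $X=A=\mathbb C$ one has $\mathcal O_X\cong C(\mathbb T)$, the core is $\mathbb C$, and $\mathcal F_{n+1}=\mathcal F_n$ while $\mathcal K(X^{\otimes(n+1)})\cong\mathbb C$; the Cuntz--Pimsner covariance relation collapses the filtration in a way that depends on $J_X$ and on $\ker\phi$, and describing this collapse precisely is exactly what Katsura's analysis of the core does. Second, the sentence ``the ambiguity produced by the covariance relation is controlled precisely by $J_X$'' is the statement that needs proof, not a proof: one must show, for an \emph{injective} covariant representation, that a relation of the form $\pi(a)+\psi^{(1)}(k)=0$ forces $a\in J_X$ and $k=-\phi(a)$ (here Proposition \ref{CharacterizationOfKatIdeal} is indeed the right tool, applied to the ideal $\{a\in A:\pi(a)\in\psi^{(1)}(\mathcal K(X))\}$ after checking that $\phi$ is injective on it and orthogonal to $\ker\phi$), and then run the analogous argument for the intersections $\mathcal F_n\cap\psi^{(n+1)}\big(\mathcal K(X^{\otimes(n+1)})\big)$ at every level before taking the inductive limit and closing up. That chain of lemmas occupies the technical core of \cite{Katsura1} (his Sections 5--6), and without it your inductive step, as stated via the quotient isomorphism, does not go through.
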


\subsection{Actions, Coactions and Gradings}
We will make use of the relationship between the gauge action of 
$\mathbb T$ and the natural $\mathbb Z$-grading of Cuntz-Pimsner algebras.
This relationship comes from the duality between actions of an abelian group
and coactions of the dual group. We will briefly recall the basics of actions
and coactions here.
By an \emph{action} of a locally compact group $G$ on a $C^*$-algebra
$A$, we shall mean
a strongly continuous group homomorphism $\alpha:G\to Aut(A)$. We will refer
to the triple $(A,G,\alpha)$ as a \emph{``$C^*$-dynamical system''}. 
For $s\in G$ we
will write $\alpha_s$ for the automorphism $\alpha(s)$.
Let $G$ be a discrete group. Then the group $C^*$-algebra $C^*(G)$
is generated by a unitary representation $G\ni s\mapsto u_s\in \mathcal UC^*(G)$.
We will abuse notation
and write $s$ for $u_s$. The function $\delta_G:C^*(G)\to C^*(G)\otimes
C^*(G)$ defined by $s\mapsto s\otimes s$ for all $s\in G$ is called the 
\emph{comultiplication map} on $C^*(G)$. There is a comultiplication
map defined for any locally compact group, but in general it will map
into $M(C^*(G)\otimes C^*(G))$. See the appendix of \cite{enchilada}
for more details.

A \emph{coaction} of a group $G$ on a $C^*$-algebra $A$
is a non-degenerate, injective homomorphism $\delta:A\to
M(A\otimes C^*(G))$ such that
\begin{enumerate}
\item
$\delta(A)(1\otimes C^*(G))\subseteq A\otimes C^*(G)$
\item
$(\delta\otimes id_G)\circ\delta=(id_A\otimes\delta_G)\circ\delta$ where
both sides are viewed as maps $A\to M(A\otimes C^*(G)\otimes C^*(G))$
\end{enumerate}
A coaction is called \emph{nondegenerate} if 
the closed linear span of $\delta(A)(1\otimes C^*(G))$
is dense in $A\otimes C^*(G)$. We will also refer to the
triple $(A,G,\delta)$
as a coaction.
Note that if $G$ is discrete then $C^*(G)$ is unital (with unit $u_e$)
and thus $\delta(A)\subseteq\delta(A)(1\otimes C^*(G))$ so by condition
1, $\delta$ maps into $A\otimes C^*(G)$. In fact, the results
of \cite{Quigg} tell us that (nondegenerate) 
coactions of discrete groups correspond to
group gradings. We will discuss this more formally in a moment, but
first we will state precisely what we mean by a grading of a $C^*$-algebra.
Let $G$ be a discrete group and $A$ a $C^*$-algebra. By a 
\emph{$G$-grading} of $A$ we shall mean a collection $\{A_s\}_{s\in G}$
of linearly independent closed subspaces of $A$ such that the following hold:
\begin{enumerate}
\item
$A_sA_t\subseteq A_{st}$ for all $s,t\in G$
\item
$A_s^*=A_{s^{-1}}$
\item
$A=\overline{span}_{s\in G}(A_s)$
\end{enumerate}
We will say a grading is \emph{full} if we have $\overline{A_sA_t}
=A_{st}$ for all $s,t\in G$.
We summarize Lemma 1.3 and Lemma 1.5 of \cite{Quigg} as follows:
\begin{lem} \label{grading}
A nondegenerate coaction of a discrete group
$G$ on a $C^*$-algebra $A$ gives a $G$-grading of $A$. 
Specifically, we let $A_s=\{a\in A: \delta(a)=a\otimes s\}$ for each
$s\in G$.
\end{lem}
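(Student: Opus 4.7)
The plan is to verify the linear independence of the $A_s$ and the three grading axioms in sequence, with density being the only nontrivial step. First, each $A_s$ is closed since it is the kernel of the bounded linear map $a\mapsto \delta(a) - a\otimes s$. The inclusion $A_sA_t \subseteq A_{st}$ follows from multiplicativity: if $a\in A_s$ and $b\in A_t$, then $\delta(ab) = \delta(a)\delta(b) = (a\otimes s)(b\otimes t) = ab\otimes st$. Similarly $A_s^* = A_{s^{-1}}$ follows from $\delta(a^*) = \delta(a)^* = a^*\otimes s^* = a^*\otimes s^{-1}$, using that the group element $s$ is unitary in $C^*(G)$. Linear independence follows because a relation $\sum_{s\in F}a_s = 0$ with $a_s\in A_s$ pushes forward under $\delta$ to $\sum_{s\in F} a_s\otimes s = 0$ in $A\otimes C^*(G)$, and the group elements $\{s\}_{s\in G}$ remain linearly independent there.

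For density, I would construct for each $s\in G$ a bounded linear ``Fourier coefficient'' map $E_s\colon A\to A_s$ via slice maps. Let $\phi_s\colon C^*(G)\to \mathbb{C}$ be the vector functional $\phi_s(c) = \langle \lambda(c)\delta_e,\delta_s\rangle$ coming from the left regular representation; this is bounded and satisfies $\phi_s(t) = \delta_{s,t}$ on group elements. Define $E_s(a) := (\mathrm{id}_A \otimes \phi_s)(\delta(a))$. Applying $\mathrm{id}_A \otimes \mathrm{id} \otimes \phi_s$ to the coaction identity $(\delta\otimes \mathrm{id})\circ \delta = (\mathrm{id}_A \otimes \delta_G)\circ \delta$ and using that $(\mathrm{id}\otimes \phi_s)\circ \delta_G$ acts on a group element $t$ by $t\mapsto \delta_{s,t}\cdot s$, one obtains $\delta(E_s(a)) = E_s(a)\otimes s$, so $E_s$ lands in $A_s$. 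To finish, I would show that the $E_s(a)$ collectively determine $a$, so that $a$ is a norm limit of finite sums from $\bigcup_s A_s$; the formal identity $\delta(a) = \sum_s E_s(a)\otimes s$ interpreted in an appropriate approximate sense makes this plausible, and a Hahn-Banach separation argument against the closed span would then yield a contradiction.

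The main obstacle is precisely this final density step. The functionals $\phi_s$ factor through the regular representation $\lambda\colon C^*(G)\to C^*_r(G)$, so for non-amenable $G$ they do not separate points of $C^*(G)$, and one cannot naively conclude $\delta(a)=0$ from $E_s(a) = 0$ for all $s\in G$. Overcoming this requires using the nondegeneracy hypothesis on $\delta$---specifically that $\delta(A)(1\otimes C^*(G))$ is dense in $A\otimes C^*(G)$---to extract enough information from the slice maps to reconstruct $a$ itself. This analytic subtlety is exactly the content of Quigg's Lemmas 1.3 and 1.5, whose detailed arguments my proof would follow.
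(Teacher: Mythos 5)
Your verification of the grading axioms (closedness, $A_sA_t\subseteq A_{st}$, $A_s^*=A_{s^{-1}}$, linear independence via slicing) is correct, and for the only substantive point --- density of $\overline{\mathrm{span}}\bigcup_{s\in G}A_s$ in $A$, which is where nondegeneracy of $\delta$ must enter --- you defer to Quigg's Lemmas 1.3 and 1.5, which is exactly what the paper does: it states the lemma as a summary of those results and offers no proof of its own. So your proposal is correct and takes essentially the same route as the paper, in fact supplying more detail on the routine steps than the paper itself.
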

\begin{rmk}[Example A.23 of \cite{enchilada}]
If $G$ is abelian, then for every coaction of $G$ corresponds to
an action of the dual group $\widehat G$ and vice versa. To see this,
we first identify $C^*(G)$ and $C_0(\widehat G)$ using the abstract
Fourier transform: $\mathcal F(x)(\chi)=\chi(x)$. We also recall that,
in this situation, condition 1 from the definition of coactions is 
equivalent to $\delta$ taking values in $C_b(\widehat G,A)\in
M(A\otimes C^*(G))$ (see \cite{enchilada}
for details). With this in mind, if $(A,G,\delta)$ is a coaction,
then we define an action $\alpha^\delta$ of $\widehat G$ by setting
$\alpha^\delta_\chi(a)=\delta(a)(\chi)$.  Conversely, given an action
$\alpha$ of $\widehat G$ on a $C^*$-algebra $A$, we define a coaction
$\delta^\alpha$ by letting $\delta^\alpha(a)(\chi)=\alpha_\chi(a)$.
\end{rmk}
Let $(A,G,\alpha)$ be a $C^*$-dynamical system with $G$ compact and
abelian and let $(A,\widehat G,\delta^\alpha)$ be the associated
coaction of the dual group $\widehat G$.
Since $G$ is compact, $\widehat G$ will be discrete
and therefore $\delta^\alpha$ will give a $\widehat G$-grading
of $A$ as in Lemma \ref{grading}.
Identifying $A\otimes C^*(\widehat G)$ with $A\otimes C(G)$ and this
with $C(G,A)$, the elementary tensor $a\otimes u_\chi$ corresponds
to the map $s\mapsto \chi(s)a$. Therefore if $a\in A_\chi$ then
$\alpha_s(a)=\delta^\alpha(a)(s)=\chi(s)a$
and so the sets $A_\chi$ can be thought of equivalently as
$$A_\chi=\{a\in A:\alpha_s(a)=\chi(s)a\}$$
Thus each $A_\chi$ coincides with the so-called \emph{spectral 
subspace} associated to $\chi$.

Just as every action of a compact abelian group determines a grading
by the dual group, every grading
by a discrete abelian group determines an action of its dual group.
To see this, just note that a $G$-grading of $A$ makes $A$
into a a Fell bundle over $G$ and as in \cite{Quigg} we get a coaction
of $G$ associated to this Fell bundle which corresponds to an action
of $\widehat G$:
\begin{prop}\label{ActionFromGrading}
Let $A$ be a $C^*$-algebra and suppose $\{A_s\}_{s\in G}$ is a $G$-grading
of $A$. Let $\chi\in \widehat G$. For each $s\in G$ and each $a\in A_s$ 
define $\alpha_\chi(a)=\chi(s)a$. The maps $\alpha_\chi$ extend to automorphisms
of $A$ such that $\alpha:\chi\mapsto\alpha_\chi$ is an action of $\widehat
G$ on $A$.
\end{prop}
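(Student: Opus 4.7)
The plan is to define $\alpha_\chi$ algebraically on the dense subspace spanned by homogeneous elements, extend by continuity using the Fell bundle structure induced by the grading, and then verify the group homomorphism and strong continuity conditions.

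Let $\mathcal A$ denote the algebraic linear span of $\bigcup_{s\in G} A_s$; by the linear independence assumption in the definition of a grading, this is literally the algebraic direct sum $\bigoplus_{s\in G} A_s$, and by axioms 1--3 it is a $*$-subalgebra of $A$ that is norm-dense in $A$. On $\mathcal A$ I would define
\[
\alpha_\chi\!\left(\sum_s a_s\right) := \sum_s \chi(s)\,a_s .
\]
Using $\chi(st)=\chi(s)\chi(t)$ together with $A_sA_t\subseteq A_{st}$, this map is multiplicative; using $\overline{\chi(s)}=\chi(s^{-1})$ together with $A_s^*=A_{s^{-1}}$, it is $*$-preserving. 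Moreover, $\alpha_{\chi^{-1}}$ is a two-sided algebraic inverse on $\mathcal A$, and $\alpha_{\chi\psi}=\alpha_\chi\circ\alpha_\psi$ holds on $\mathcal A$ by direct computation.

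The main obstacle is promoting $\alpha_\chi$ to a bounded automorphism of $A$. The cleanest route is to exploit the Fell bundle observation from the paragraph preceding the statement: the grading exhibits $\{A_s\}_{s\in G}$ as a Fell bundle over $G$, and by \cite{Quigg} one obtains a (nondegenerate) coaction $\delta\colon A\to M(A\otimes C^*(G))$ with $\delta(a_s)=a_s\otimes s$ on homogeneous elements. Since $G$ is abelian, the Fourier transform identifies $C^*(G)$ with $C_0(\widehat G)$, under which evaluation at $\chi$ is a character $\mathrm{ev}_\chi$ of $C^*(G)$. The composition $(\mathrm{id}_A\otimes\mathrm{ev}_\chi)\circ\delta$ is then a $*$-homomorphism $A\to A$ that, by construction, agrees with the algebraic $\alpha_\chi$ on each $A_s$, and so is its unique continuous extension to $A$. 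The algebraic inverse $\alpha_{\chi^{-1}}$ extends similarly, upgrading $\alpha_\chi$ to an automorphism.

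Finally, the identity $\alpha_{\chi\psi}=\alpha_\chi\circ\alpha_\psi$ passes from $\mathcal A$ to $A$ by density. For strong continuity, note that for each fixed $a\in A_s$ the map $\chi\mapsto\chi(s)a$ is continuous $\widehat G\to A$; linearity gives strong continuity on $\mathcal A$, and a standard $\varepsilon/3$ argument using density of $\mathcal A$ together with the fact that each $\alpha_\chi$ is isometric extends continuity to all of $A$, completing the verification that $\alpha$ is an action of $\widehat G$.
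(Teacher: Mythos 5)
Your proposal follows essentially the same route as the paper, which justifies the proposition in a single sentence by viewing the grading as a Fell bundle, invoking \cite{Quigg} to get a coaction $\delta$ of $G$ on $A$ with $\delta(a_s)=a_s\otimes s$, and then passing to the dual action of $\widehat G$. You simply make explicit the details the paper leaves implicit (well-definedness and the $*$-algebra identities on the dense graded subalgebra, continuity via $(\mathrm{id}_A\otimes\mathrm{ev}_\chi)\circ\delta$, and strong continuity of $\chi\mapsto\alpha_\chi$), so the argument is correct and matches the paper's approach.
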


\section{Tensor Products Balanced Over Group Actions or Group Gradings}
We wish to show that the Cuntz-Pimsner algebra of an external tensor product
$X\otimes Y$
of correspondences is isomorphic to a certain subalgebra $\mathcal O_X
\otimes_\mathbb T\mathcal O_Y$ of the tensor product
$\mathcal O_X\otimes \mathcal O_Y$. This subalgebra is called the 
\emph{$\mathbb T$-balanced tensor product} of $\mathcal O_X$ and
$\mathcal O_Y$. Following \cite{Kum} we define the general construction
as follows:
\begin{dfn}
Let $G$ be a compact abelian group, and let $(A,G,\alpha)$ 
and $(B,G,\beta)$ 
be $C^*$-dynamical systems.
We define the \emph{$G$-balanced tensor product} $A\otimes_G B$ to be
the fixed point algebra $(A\otimes B)^\lambda$ where $\lambda:G\to A\otimes
B$ is the action characterized by $\lambda_s(a\otimes b)=\alpha_s(a)\otimes
\beta_{s^{-1}}(b)$. 
\end{dfn}
\begin{prop}
If $a\otimes b\in A\otimes_G B$ then $\alpha_s(a)\otimes b, a\otimes
\beta_s(b)\in A\otimes_G B$ for all $s\in G$ and $\alpha_s(a)\otimes b
=a\otimes\beta_s(b)$. 
\end{prop}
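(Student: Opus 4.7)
The plan is to unpack the defining condition $a\otimes b\in(A\otimes B)^\lambda$ and then produce the three desired conclusions by applying the appropriate factor-wise automorphisms $\alpha_s\otimes\operatorname{id}_B$ and $\operatorname{id}_A\otimes\beta_s$ to the resulting identity. Both of these maps are genuine $\ast$-automorphisms of the spatial tensor product $A\otimes B$ (since $\alpha_s$ and $\beta_s$ are $\ast$-automorphisms of $A$ and $B$ respectively), so this is legitimate.

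First I would observe that $a\otimes b\in A\otimes_G B$ means $\lambda_t(a\otimes b)=\alpha_t(a)\otimes\beta_{t^{-1}}(b)=a\otimes b$ for every $t\in G$. Specializing to $t=s$ and applying the automorphism $\operatorname{id}_A\otimes\beta_s$ to both sides gives
\[
\alpha_s(a)\otimes b=\alpha_s(a)\otimes\beta_s\!\bigl(\beta_{s^{-1}}(b)\bigr)=a\otimes\beta_s(b),
\]
which is the claimed equality.

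Next I would verify that $\alpha_s(a)\otimes b\in A\otimes_G B$; the statement for $a\otimes\beta_s(b)$ then follows either by the same argument or by invoking the equality just proved. Here the key input is that $G$ is abelian, so $\alpha$ is a group homomorphism into $\operatorname{Aut}(A)$ with $\alpha_t\alpha_s=\alpha_{ts}=\alpha_{st}=\alpha_s\alpha_t$. Consequently, for every $t\in G$,
\[
\lambda_t\!\bigl(\alpha_s(a)\otimes b\bigr)=\alpha_t(\alpha_s(a))\otimes\beta_{t^{-1}}(b)=\alpha_s\!\bigl(\alpha_t(a)\bigr)\otimes\beta_{t^{-1}}(b)=(\alpha_s\otimes\operatorname{id}_B)\bigl(\alpha_t(a)\otimes\beta_{t^{-1}}(b)\bigr),
\]
and the hypothesis $\alpha_t(a)\otimes\beta_{t^{-1}}(b)=a\otimes b$ turns the right-hand side into $\alpha_s(a)\otimes b$. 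Thus $\alpha_s(a)\otimes b$ is fixed by every $\lambda_t$, i.e., it lies in $A\otimes_G B$.

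There is no real obstacle here; the proposition is essentially a direct consequence of the definition of $\lambda$ together with the abelian-ness of $G$. The only point requiring mild care is that one is manipulating elementary tensors inside the spatial tensor product, which is justified by viewing $\alpha_s\otimes\operatorname{id}_B$ and $\operatorname{id}_A\otimes\beta_s$ as the canonical extensions of the tensor products of $\ast$-automorphisms.
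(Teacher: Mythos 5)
Your proof is correct and follows essentially the same route as the paper: the membership computation (using that $G$ is abelian so $\alpha_t\alpha_s=\alpha_s\alpha_t$, then applying $\alpha_s\otimes\operatorname{id}_B$ to the fixed-point identity) is the paper's argument verbatim. The only cosmetic difference is that you obtain the equality $\alpha_s(a)\otimes b=a\otimes\beta_s(b)$ directly by applying $\operatorname{id}_A\otimes\beta_s$ to $\lambda_s(a\otimes b)=a\otimes b$, whereas the paper deduces it by applying $\lambda_{s^{-1}}$ to the already-established element $\alpha_s(a)\otimes b$ of the fixed-point algebra; both are immediate and equally valid.
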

\begin{proof} To show that $\alpha_s(a)\otimes b\in A\otimes_G B$ 
note that for any $t\in G$ we have
\begin{align*}
\alpha_t\big(\alpha_s(a)\big)\otimes \beta_{t^{-1}}(b)&=\alpha_s
\big(\alpha_t(a)\big)\otimes\beta_{t^{-1}}(b)\\
&=(\alpha_s\otimes id_B)(\alpha_t(a)\otimes \beta_{t^{-1}}(b))\\
&=(\alpha_s\otimes id_B)(a\otimes b)\\
&=\alpha_s(a)\otimes b
\end{align*}
showing that $a\otimes \beta_s(b)\in A\otimes_G B$ is similar.
Now that this has been established, the equality follows easily:
\begin{align*}
\alpha_s(a)\otimes b&=\alpha_{s^{-1}}\big(\alpha_s(a)\big)\otimes
\beta_s(b)\\
&=a\otimes \beta_s(b)
\end{align*}
\end{proof}
Thus the actions $\alpha\otimes \iota_A$ and $\iota_B
\otimes\beta$ coincide on $A\otimes_G B$ where $\iota_A$ and $\iota_B$ are
the trivial actions. We will refer to the restriction of $A\otimes\iota_B$
to $A\otimes_G B$ (or equivalently the restriction of $\iota_A\otimes \beta$
to $A\otimes_G B$) as the \emph{balanced action} of $G$ and we will denote it by $\alpha\otimes_G\beta$.

The main result of this paper can be stated roughly as 
$$\mathcal O_{X\otimes Y}\cong\mathcal O_X\otimes_{\mathbb T}\mathcal O_Y$$
for suitable $X$ and $Y$ where we are balancing over the gauge actions on
$\mathcal O_X$ and $\mathcal O_Y$. This generalizes the following example
from \cite{Kum}:
\begin{ex}
Let $E$ and $F$ be source-free, row-finite discrete graphs and let 
$E\times F$ denote the product graph as in Example \ref{TensorTopGraph}.
Then $C^*(E\times F)\cong C^*(E)\otimes_\mathbb T C^*(F)$.
\end{ex} 


As we noted above, actions of compact abelian groups
correspond to gradings of the dual group. It will be useful to be able
to describe $\mathbb T$-balanced tensor products in terms of the corresponding
$\mathbb Z$-gradings. But first we will need a fact
which follows from the Peter-Weyl theorem (Theorem VII.1.35 of \cite{Gaal})
\begin{lem}\label{Haar}
Let $G$ be a compact abelian group with a normalized Haar measure
and let $\chi$ be a character (i.e. a
continuous homomorphism $G\to\mathbb T$). Then
$$\int_G\chi(s)ds=
\begin{cases}
1 & \text{$\chi$ is the trivial homomorphism}\\
0 & \text{otherwise}
\end{cases}$$
\end{lem}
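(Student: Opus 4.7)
The plan is to give a short, self-contained argument using translation invariance of Haar measure, which is considerably more elementary than invoking Peter-Weyl, though the result does of course fall out of that theorem as stated.

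First I would dispose of the trivial case: if $\chi\equiv 1$, then $\int_G \chi(s)\,ds=\int_G 1\,ds=1$ because the Haar measure is normalized. So assume $\chi$ is nontrivial, and pick some $t\in G$ with $\chi(t)\neq 1$; such a $t$ exists precisely because $\chi$ is not the trivial homomorphism.

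The key step is to compute $\int_G \chi(s)\,ds$ in two ways using the translation invariance of the Haar measure. On the one hand, by translation invariance we have
\[
\int_G \chi(s)\,ds \;=\; \int_G \chi(st)\,ds.
\]
On the other hand, since $\chi$ is a homomorphism, $\chi(st)=\chi(s)\chi(t)$, and $\chi(t)$ is a constant, so
\[
\int_G \chi(st)\,ds \;=\; \chi(t)\int_G \chi(s)\,ds.
\]
Combining these gives $\bigl(1-\chi(t)\bigr)\int_G \chi(s)\,ds = 0$, and since $\chi(t)\neq 1$ we conclude $\int_G \chi(s)\,ds=0$.

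There is no real obstacle here; the only subtlety is remembering to verify that a nontrivial character genuinely attains a value different from $1$ somewhere in $G$, which is immediate from the definition. This argument avoids any representation-theoretic machinery and uses only that normalized Haar measure exists on a compact group and is translation invariant.
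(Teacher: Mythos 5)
Your argument is correct, and it takes a different route from the paper: the paper does not prove this lemma at all, but simply cites it as a consequence of the Peter--Weyl theorem (Theorem VII.1.35 of Gaal), i.e.\ it leans on the orthogonality relations for characters coming from representation theory. Your proof instead uses only translation invariance of the normalized Haar measure: the substitution $s\mapsto st$ together with the homomorphism property gives $\bigl(1-\chi(t)\bigr)\int_G\chi(s)\,ds=0$, and choosing $t$ with $\chi(t)\neq 1$ (possible exactly when $\chi$ is nontrivial) forces the integral to vanish, while the trivial case is immediate from normalization. This is the standard elementary argument; what it buys is a completely self-contained, two-line proof that avoids any representation-theoretic machinery, whereas the paper's citation is shorter on the page and places the lemma in the broader context of the orthogonality relations, which is all that is needed for the conditional-expectation computation in Proposition \ref{l1}. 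One small point worth noting in your write-up: translation invariance of Haar measure is being used for right translation, which is harmless here since $G$ is abelian (and compact groups are unimodular in any case), and continuity of $\chi$ guarantees integrability; neither issue causes a gap.
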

Now we are ready to describe $G$-balanced tensor products in terms of gradings
of $\widehat G$.
\begin{prop}\label{l1}
Let $(A,G,\alpha)$ and $(B,G,\beta)$ be $C^*$-dynamical systems with $G$
abelian. Then as discussed in the previous section, the coactions 
$\delta^\alpha$ and $\delta^\beta$ give
$\widehat G$-gradings of $A$ and $B$: 
\begin{align*}
A_\chi&=\{a\in A: \alpha_s(a)=\chi(s)a\}\\
B_\chi&=\{b\in B: \beta_s(b)=\chi(s)b\}
\end{align*}
Let $$S_\chi:=\{a\otimes b: a\in A_\chi, b\in B_\chi\}$$
and let $$S:=\bigcup_{\chi\in\widehat G}S_\chi$$
Then $A\otimes_G B=\overline{span}(S)$.
\begin{proof}
First, note that if 
$a\otimes b\in S$, then $a\otimes b\in S_\chi$ for some $\chi$ and
so for all $s\in G$:
\begin{align*}
\lambda_s(a\otimes b)&=\alpha_s(a)\otimes \beta_{s^{-1}}(b)\\
&=\chi(s)a\otimes \chi(s^{-1})b\\
&=\chi(s)\chi(s^{-1})(a\otimes b)\\
&=a\otimes b
\end{align*}
so $a\otimes  b\in A\otimes_G B$ and hence $S\subseteq A
\otimes_G B$. Since $A\otimes_G B$ is a $C^*$-algebra,
we have that $\overline{span}(S)\subseteq A\otimes_G B$.

Now, since $A$ is densely spanned by the $A_\chi$'s and $B$ is densely
spanned by the $B_\chi$'s, the tensor product $A\otimes B$ will be 
densely spanned by elementary tensors $a\otimes b$ where $a\in A_\chi$
and $b\in B_{\chi'}$ for some $\chi,\chi'\in\widehat G$. More precisely,
let 
$$T_{\chi,\chi'}=\{a\otimes b:a\in A_\chi,b\in B_{\chi'}\}$$ and let
$$T=\bigcup_{\chi,\chi'\in\widehat G}T_{\chi,\chi'}$$
Then we have $A\otimes B=\overline{span}(T)$. 
Let $\varepsilon:A\otimes B\to A\otimes_G B$ be the
conditional expectation $c\mapsto\int_G\lambda_s(c)ds$.
Then, since $\varepsilon$ is continuous, linear and surjective, 
$\varepsilon(T)$ densely spans $A\otimes_G B$. Let 
$a\otimes b\in T$, say $a\otimes b\in T_{\chi,\chi'}$. Then
using Lemma \ref{Haar} and the fact that a product of two
characters is a character, we have
\begin{align*}
\varepsilon(a\otimes b)&=\int_G\lambda_s(a\otimes b)ds\\
&=\int_G\big(\chi(s)a\otimes \chi'(s^{-1})b\big)ds\\
&=\left(\int_G\chi(s)\overline{\chi'(s)}dz\right)a\otimes b\\
&=\left(\int_G \left(\chi\overline{\chi'}\right)(s)ds\right)
a\otimes b\\
&=
\begin{cases}
1 & \text{if $\chi\overline{\chi'}$ is trivial}\\
0 & \text{otherwise}
\end{cases}
\end{align*}

But $\chi\overline{\chi'}$ will be trivial if and only if $\chi=\chi'$.
This means that if $\chi\neq\chi'$ (i.e. $a\otimes b\in T\backslash S)$ then
$\varepsilon(a\otimes b)=0$ and if $\chi=\chi'$ (i.e. $a\otimes b\in S$)
Then $\varepsilon(a\otimes b)=a\otimes b$. This implies
that $\varepsilon(T)=S$ and thus, since $T$ densely spans $A\otimes B$, the
linearity and continuity of $\varepsilon$ imply that $S$ densely spans
$\varepsilon(A\otimes B)=A\otimes_G B$.
Therefore $A\otimes_G B=\overline{span}(S)$.
\end{proof}
\end{prop}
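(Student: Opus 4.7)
The plan is to prove both inclusions $\overline{span}(S) \subseteq A \otimes_G B$ and $A \otimes_G B \subseteq \overline{span}(S)$ separately. For the first, easier inclusion, I would verify directly that every generator of $S$ is fixed by $\lambda$: if $a \in A_\chi$ and $b \in B_\chi$, then
$$\lambda_s(a \otimes b) = \alpha_s(a) \otimes \beta_{s^{-1}}(b) = \chi(s)\chi(s^{-1})(a \otimes b) = a \otimes b,$$
so $S \subseteq A \otimes_G B$, and taking closed linear span preserves the containment because $A \otimes_G B$ is a closed subspace of $A \otimes B$.

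For the reverse inclusion, my strategy is to exploit the canonical conditional expectation $\varepsilon : A \otimes B \to A \otimes_G B$ obtained by averaging over the Haar measure, $\varepsilon(c) = \int_G \lambda_s(c)\,ds$. Because $G$ is compact and $\lambda$ is strongly continuous, $\varepsilon$ is a well-defined contractive linear surjection onto the fixed-point algebra. By Lemma \ref{grading} (applied via the action/coaction dictionary summarized before the proposition), $A$ is densely spanned by $\bigcup_\chi A_\chi$ and similarly for $B$, hence $A \otimes B = \overline{span}(T)$ where $T = \bigcup_{\chi,\chi' \in \widehat G} T_{\chi,\chi'}$ with $T_{\chi,\chi'} = \{a \otimes b : a \in A_\chi,\ b \in B_{\chi'}\}$.

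The key computation is then to evaluate $\varepsilon$ on an elementary tensor in $T_{\chi,\chi'}$: pulling the scalar out of the integral gives
$$\varepsilon(a \otimes b) = \left(\int_G \chi(s)\overline{\chi'(s)}\,ds\right)(a \otimes b),$$
and since $\chi\overline{\chi'}$ is itself a character which is trivial if and only if $\chi = \chi'$, Lemma \ref{Haar} forces this integral to equal $1$ when $\chi = \chi'$ and $0$ otherwise. Hence $\varepsilon(T) \subseteq S \cup \{0\}$ and $\varepsilon$ is the identity on each $T_{\chi,\chi}$, so continuity and linearity of $\varepsilon$ together with surjectivity onto $A \otimes_G B$ give $A \otimes_G B = \overline{span}(\varepsilon(T)) \subseteq \overline{span}(S)$.

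The only step I expect to require care is justifying that $\varepsilon$ is a bona fide surjection onto the fixed-point algebra rather than merely a map into it; this is standard for actions of compact groups but does rely on strong continuity of $\lambda$, the existence of a normalized Haar measure, and the observation that $\varepsilon$ is the identity on elements already fixed by $\lambda$. Once that is established, the remainder reduces to the orthogonality-of-characters identity in Lemma \ref{Haar}.
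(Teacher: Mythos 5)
Your proposal is correct and follows essentially the same route as the paper: the easy inclusion by checking $\lambda$-invariance of elementary tensors in $S$, and the reverse inclusion by applying the Haar-averaging conditional expectation $\varepsilon$ to the spanning set $T$ and invoking the character-orthogonality lemma. Your added remark about verifying that $\varepsilon$ really surjects onto the fixed-point algebra is a sensible point of care, but it is the same standard fact the paper relies on.
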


Proposition \ref{ActionFromGrading} tells us that the $\widehat G$-grading
of $A\otimes_G B$ just described should give us an action of $G$ on $A\otimes_GB$.
We will now show that this action coincides
exactly with the balanced action:
\begin{prop}
Let $\{S_\chi\}_{s\in \widehat G}$ be the $\widehat G$-grading of $A\otimes_GB$
described in the previous proposition and let $\gamma$ be the action
associated to this grading by Proposition \ref{ActionFromGrading}.
Then $\gamma=\alpha\otimes_G\beta$.
\end{prop}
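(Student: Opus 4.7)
The plan is to reduce the equality of these two actions to an equality on a dense spanning set of elementary tensors, where both actions admit very explicit formulas. Since $\alpha \otimes_G \beta$ is defined as the restriction of $\alpha \otimes \iota_B$ (or equivalently $\iota_A \otimes \beta$) to $A \otimes_G B$, and $\gamma$ is built from the grading via Proposition \ref{ActionFromGrading}, both are strongly continuous actions of $G$ by automorphisms of $A \otimes_G B$. Therefore it is enough to show that they agree on the total set $S = \bigcup_{\chi \in \widehat{G}} S_\chi$ guaranteed by Proposition \ref{l1}, because $A \otimes_G B = \overline{\mathrm{span}}(S)$ and both actions are linear and continuous.

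So first, I would fix $\chi \in \widehat{G}$ and take an elementary tensor $a \otimes b \in S_\chi$, so that $a \in A_\chi$ and $b \in B_\chi$. By the definition of the spectral subspaces $A_\chi$ and $B_\chi$ recalled in Proposition \ref{l1}, we have $\alpha_s(a) = \chi(s) a$ and $\beta_s(b) = \chi(s) b$ for every $s \in G$. Using the description of $\alpha \otimes_G \beta$ as the restriction of $\alpha \otimes \iota_B$, I compute
\[
(\alpha \otimes_G \beta)_s(a \otimes b) = \alpha_s(a) \otimes b = \chi(s)(a \otimes b).
\]
On the other hand, Proposition \ref{ActionFromGrading} defines $\gamma_s$ on the homogeneous component $S_\chi$ by $\gamma_s(c) = \chi(s) c$, so in particular $\gamma_s(a \otimes b) = \chi(s)(a \otimes b)$. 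The two actions therefore agree on every element of $S_\chi$, and hence on all of $S$.

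Finally, I would invoke linearity and continuity to promote this to an equality on all of $A \otimes_G B$: for each $s \in G$, both $(\alpha \otimes_G \beta)_s$ and $\gamma_s$ are bounded linear maps that coincide on the total set $S$ whose closed linear span is $A \otimes_G B$, and so they coincide everywhere. I do not anticipate a real obstacle here; the argument is essentially bookkeeping once one recognizes that Proposition \ref{l1} already does the hard work of identifying the homogeneous components of $A \otimes_G B$, and that the formulas for both actions on $S_\chi$ collapse to scalar multiplication by $\chi(s)$.
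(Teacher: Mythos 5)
Your proof is correct and follows essentially the same route as the paper: both check agreement on each homogeneous component $S_\chi$, where $\gamma_s$ acts by $\chi(s)$ by definition and $(\alpha\otimes_G\beta)_s(a\otimes b)=\alpha_s(a)\otimes b=\chi(s)(a\otimes b)$, then extend by linearity and continuity. Your write-up merely makes the density/continuity step more explicit than the paper does.
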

\begin{proof}
It suffices to check that these maps coincide on each subspace $S_\chi$.
Let $a\otimes b\in S_\chi$. Then for each $s\in\widehat G$ we have
$\gamma_s(a\otimes b)=\chi(s)(a\otimes b)$ by definition. On the other hand,
since $a\in A_\chi$:
\begin{align*}
(\alpha\otimes_G\beta)_s(a\otimes b)
&=\alpha_s(a)\otimes b\\
&=\big(\chi(s)a\big)\otimes b\\
&=\chi(s)(a\otimes b)
\end{align*}
so $\gamma_s(a\otimes b)=(\alpha\otimes_G\beta)_s(a\otimes b)$ for every
$s\in G$ and every $a\otimes b$ in $S_\chi$.
\end{proof}
The following lemma will be useful later:
\begin{lem}\label{l3}
Let $\{A_n\}_{n\in\mathbb Z}$ and $\{B_n\}_{n\in\mathbb Z}$ be saturated
$\mathbb Z$-gradings of $C^*$-algebras $A$ and $B$ and let $\{S_n\}_{n\in
\mathbb Z}$ be the $\mathbb Z$-grading of $A\otimes_\mathbb T B$ as in the
previous propositions. Then $A\otimes_\mathbb
T B$ is generated by $S_1$.
\end{lem}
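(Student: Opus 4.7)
The plan is to let $C$ denote the $C^*$-subalgebra of $A\otimes_\mathbb{T}B$ generated by $S_1$ and show that $S_n\subseteq C$ for every $n\in\mathbb{Z}$. Since Proposition \ref{l1} gives $A\otimes_\mathbb{T}B=\overline{\operatorname{span}}\bigcup_{n\in\mathbb{Z}}S_n$, this will yield $C=A\otimes_\mathbb{T}B$. Throughout I will use two standing facts: (i) since the gradings are saturated, iterating $\overline{A_sA_t}=A_{s+t}$ gives $A_n=\overline{\operatorname{span}}\{a_1\cdots a_n:a_i\in A_1\}$ for $n\geq 1$, with the analogous statement for $B_n$; and (ii) $A_{-1}=A_1^*$ and $B_{-1}=B_1^*$ by the grading axioms.

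For $n\geq 1$, take $a\otimes b\in S_n$, so $a\in A_n$ and $b\in B_n$. By (i), approximate $a=\lim_\varepsilon \sum_j a_{j,1}^{(\varepsilon)}\cdots a_{j,n}^{(\varepsilon)}$ and $b=\lim_\varepsilon \sum_k b_{k,1}^{(\varepsilon)}\cdots b_{k,n}^{(\varepsilon)}$ with $a_{j,i}^{(\varepsilon)}\in A_1$ and $b_{k,i}^{(\varepsilon)}\in B_1$. By bilinearity and joint continuity of multiplication in $A\otimes_\mathbb{T}B$,
\[
a\otimes b=\lim_\varepsilon\sum_{j,k}\bigl(a_{j,1}^{(\varepsilon)}\otimes b_{k,1}^{(\varepsilon)}\bigr)\bigl(a_{j,2}^{(\varepsilon)}\otimes b_{k,2}^{(\varepsilon)}\bigr)\cdots\bigl(a_{j,n}^{(\varepsilon)}\otimes b_{k,n}^{(\varepsilon)}\bigr),
\]
and each factor lies in $S_1$, so $a\otimes b\in C$.

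The case $n\leq -1$ reduces immediately to the previous one: if $a\otimes b\in S_n$ then $a^*\in A_{-n}$ and $b^*\in B_{-n}$, so $(a\otimes b)^*=a^*\otimes b^*\in S_{-n}\subseteq C$ by the positive case, and hence $a\otimes b\in C$ since $C$ is $*$-closed. For $n=0$, the saturated property at $(s,t)=(1,-1)$ gives $A_0=\overline{A_1A_{-1}}=\overline{A_1A_1^*}$ and similarly $B_0=\overline{B_1B_1^*}$; so any $a\otimes b\in S_0$ is approximated by finite sums of terms $(a_{j,1}a_{j,2}^*)\otimes(b_{k,1}b_{k,2}^*)=(a_{j,1}\otimes b_{k,1})(a_{j,2}\otimes b_{k,2})^*$ with $a_{j,i}\in A_1$, $b_{k,i}\in B_1$, which again lie in $C$.

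I do not expect any serious obstacle; the only thing to be careful about is that the two approximations (for $a\in A_n$ and $b\in B_n$) must be combined into a joint approximation of $a\otimes b$ inside $A\otimes_\mathbb{T}B$, which is legitimate by continuity of multiplication and of the tensor product in each variable. Once the three cases $n>0$, $n<0$, $n=0$ are assembled, Proposition \ref{l1} closes the argument.
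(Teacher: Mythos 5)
Your proof is correct and follows essentially the same route as the paper: the paper shows $S_nS_m=S_{n+m}$ using saturation of the gradings and then invokes the fact that $1$ generates $\mathbb Z$, which is exactly your decomposition of $S_n$ into products of elements of $S_1$ (with adjoints handling $n<0$ and $S_1S_1^*$ handling $n=0$), spelled out in more detail.
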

\begin{proof}
First, we will show that $S_nS_m=S_{n+m}$. We already have that
$S_nS_m\subseteq S_{n+m}$ so it suffices to show the reverse inclusion.
Let $a\otimes b\in S_{n+m}$. Then $a\in A_{n+m}$  and $b\in B_{n+m}$
so $a=\sum a_i a'_i$ and $b=\sum b_i b'_i$ with $a_i\in A_n$,$a'_i\in A_m$,
$b_i\in B_n$, and $b'_i\in B_m$. Therefore $a\otimes b=\sum_{i,j}
(a_i\otimes b_j)(a'_i\otimes b'_j)$ 
where $a_i\otimes b_j\in S_n$ and $a'_i\otimes b'_j
\in S_m$ so $S_{n+m}\subseteq S_nS_m$ and hence $S_nS_m=S_{n+m}$.

Since $1$ generates $\mathbb Z$ as a group, $S_1$ generates $\overline{span}\bigcup
S_n=A\otimes_\mathbb T B$ as a $C^*$-algebra. 
\end{proof}
\section{Ideal Compatibility}
In this section we introduce technical conditions which we will need for
our main result to hold.
\begin{dfn}\label{compatible} 
Let $X$ and $Y$ be correspondences over $C^*$-algebras $A$ and $B$
and let $J_X$, $J_Y$, $J_{X\otimes Y}$ be the Katsura ideals (i.e.
$$J_X=\{a\in A:\phi(a)\in\mathcal K(X)\text{ and }aa'=0
\text{ if }\phi_X(a')=0\}$$ and so on).
We say that $X$ and $Y$ are \emph{``ideal-compatible''} if 
$J_{X\otimes Y}= J_X\otimes J_Y$. 
\end{dfn}

The simplest way for this condition to hold is if the left actions
of $A$ and $B$ on $X$ and $Y$ are injective and implemented by compacts.
In this case it will also be true that the left action of $A\otimes B$
on $X\otimes Y$ will be injective and implemented by compacts. Thus we
will have that $J_X=A$, $J_Y=B$, and $J_{X\otimes Y}=A\otimes B$ so
ideal compatibility is automatic. Thus we have established:
\begin{prop}
Let $X_A$ and $Y_B$ be correpondences such that the left actions of $A$
and $B$ are injective and implemented by compacts. Then $X$ and $Y$ are 
ideal-compatible.
\end{prop}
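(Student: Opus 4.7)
The plan is to reduce the claim to a direct application of Lemma \ref{TensorOfCompacts} together with a bookkeeping check of the definition of the Katsura ideal.

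First I would compute $J_X$ and $J_Y$ under the hypotheses. If $\phi_X : A \to \mathcal L(X)$ is injective, then $\ker(\phi_X) = 0$ and hence $(\ker \phi_X)^\perp = A$. If additionally $\phi_X$ takes values in $\mathcal K(X)$, then $\phi_X^{-1}(\mathcal K(X)) = A$. By the definition of the Katsura ideal,
\[
J_X = \phi_X^{-1}\bigl(\mathcal K(X)\bigr) \cap \bigl(\ker \phi_X\bigr)^{\perp} = A,
\]
and the identical argument gives $J_Y = B$. Consequently $J_X \otimes J_Y = A \otimes B$, and it only remains to identify $J_{X \otimes Y}$ with the same thing.

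Next I would apply Lemma \ref{TensorOfCompacts}. That lemma says precisely that when both left actions are injective and implemented by compacts, so is the left action $\phi_{X \otimes Y}$ of $A \otimes B$ on the external tensor product $X \otimes Y$. Running the same Katsura-ideal computation as above, now for the correspondence $X \otimes Y$, yields $J_{X \otimes Y} = A \otimes B$. Combining this with the previous paragraph gives
\[
J_{X \otimes Y} = A \otimes B = J_X \otimes J_Y,
\]
which is the definition of ideal compatibility.

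There is essentially no obstacle here: the content of the proposition is entirely packaged in Lemma \ref{TensorOfCompacts}, and the only thing one needs to verify by hand is the trivial observation that the Katsura ideal of a correspondence whose left action is injective and compact-valued is the whole coefficient algebra. The proof is therefore just a two-line chain of equalities once the lemma is invoked.
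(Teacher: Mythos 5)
Your proposal is correct and matches the paper's own argument: the paper likewise notes that under these hypotheses the left action of $A\otimes B$ on $X\otimes Y$ is injective and implemented by compacts (Lemma \ref{TensorOfCompacts}), so that $J_X=A$, $J_Y=B$, and $J_{X\otimes Y}=A\otimes B$, making ideal compatibility automatic. No gaps; your explicit computation of the Katsura ideals is exactly the bookkeeping the paper leaves implicit.
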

The following two lemmas are inspired by Lemma 2.6 of \cite{Quigg2}.
\begin{lem}
Let $X$ and $Y$ be correspondences over $A$ and $B$. Then $J_X\otimes J_Y
\subseteq J_{X\otimes Y}$.
\end{lem}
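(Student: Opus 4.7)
The plan is to invoke Proposition \ref{CharacterizationOfKatIdeal}: it suffices to show that $J_X \otimes J_Y$ is an ideal of $A \otimes B$ which is mapped injectively into $\mathcal{K}(X \otimes Y)$ by the left action $\phi_{X \otimes Y}$ on the external tensor product. Since $J_X \otimes J_Y$ is already an ideal of $A \otimes B$ (minimal tensor of ideals), the two things to check are injectivity and landing in the compacts.

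First I would observe that $\phi_X$ is injective on $J_X$. By definition $J_X \subseteq (\ker \phi_X)^{\perp}$, and for any self-adjoint ideal $I$ of a $C^*$-algebra one has $I \cap I^{\perp} = 0$: if $a$ lies in both, then $a^* \in I$ and $a \in I^{\perp}$ force $a a^* = 0$, hence $a = 0$. The same argument shows $\phi_Y$ is injective on $J_Y$. Since we are working with minimal tensor products, the tensor product of two injective $*$-homomorphisms is injective, so the restriction
\[
\phi_X\!\mid_{J_X} \,\otimes\, \phi_Y\!\mid_{J_Y} : J_X \otimes J_Y \longrightarrow \mathcal{L}(X) \otimes \mathcal{L}(Y)
\]
is injective, and by definition of $J_X, J_Y$ its image sits inside $\mathcal{K}(X) \otimes \mathcal{K}(Y)$.

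Next I would identify this with the restriction of $\phi_{X \otimes Y}$ to $J_X \otimes J_Y$. Let $\kappa : \mathcal{K}(X) \otimes \mathcal{K}(Y) \to \mathcal{K}(X \otimes Y)$ be the isomorphism of Lemma \ref{TensorOfCompacts}. On an elementary tensor $a \otimes b$ with $a \in J_X$, $b \in J_Y$, and on an elementary tensor $x \otimes y$, both $\phi_{X \otimes Y}(a \otimes b)$ and $\kappa\bigl(\phi_X(a) \otimes \phi_Y(b)\bigr)$ send $x \otimes y$ to $\phi_X(a) x \otimes \phi_Y(b) y$; by linearity and continuity they agree as operators. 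Hence $\phi_{X \otimes Y}\!\mid_{J_X \otimes J_Y} = \kappa \circ \bigl(\phi_X\!\mid_{J_X} \otimes \phi_Y\!\mid_{J_Y}\bigr)$, which is injective and has image in $\mathcal{K}(X \otimes Y)$. Proposition \ref{CharacterizationOfKatIdeal} then yields $J_X \otimes J_Y \subseteq J_{X \otimes Y}$.

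The only mildly delicate point is the injectivity of $\phi_X\!\mid_{J_X}$, which is the small $C^*$-algebra fact $I \cap I^{\perp} = 0$ above; everything else reduces to a definition-chase together with the standing fact (part of Lemma \ref{TensorOfCompacts}) that $\mathcal{K}(X) \otimes \mathcal{K}(Y) \cong \mathcal{K}(X \otimes Y)$ and the standard property of the minimal tensor product that preserves injective $*$-homomorphisms.
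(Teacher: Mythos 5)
Your proposal is correct and follows essentially the same route as the paper: both apply Proposition \ref{CharacterizationOfKatIdeal} after noting that $\phi_{X\otimes Y}=\kappa\circ(\phi_X\otimes\phi_Y)$ carries $J_X\otimes J_Y$ injectively into $\mathcal K(X)\otimes\mathcal K(Y)\cong\mathcal K(X\otimes Y)$. You merely spell out details the paper leaves implicit (injectivity of $\phi_X$ on $J_X$ via $I\cap I^{\perp}=0$, preservation of injectivity by the minimal tensor product, and that $J_X\otimes J_Y$ is an ideal), which is fine.
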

\begin{proof}
Since $\phi_X$ maps $J_X$ injectively into $\mathcal K(X)$
and $\phi_Y$ maps $J_Y$ injectively into $\mathcal K(Y)$, $\phi_{X\otimes
Y}=\phi_X\otimes\phi_Y$ will map $J_X\otimes J_Y$ injectively into
$\mathcal K(X)\otimes K(Y)$, but $\mathcal K(X)\otimes\mathcal K(Y)
=\mathcal K(X\otimes Y)$ so $\phi_{X\otimes Y}$ maps $J_X\otimes J_Y$
injectively into $\mathcal K(X\otimes Y)$. Thus by Proposition 
\ref{CharacterizationOfKatIdeal} $J_X\otimes J_Y\subseteq J_{X\otimes Y}$.
\end{proof}
Recall that if $A$ and $B$ are $C^*$-algebras
and $C$ is an subalgebra of $A$, the triple $(C,A,B)$ is said to 
satisfy the \emph{slice map property} if 
$$C\otimes B=\{x\in A\otimes B:(id_A\otimes \omega)(x)\in C\text{ for all
}\omega\in B^*\}$$
\begin{lem}
Let $X$ and $Y$ be correspondences over $C^*$-algebras $A$ and $B$
and suppose $Y$ is an imprimitivity bimodule.
If $(J_X,A,B)$ satisfies the slice map property, then
$X$ and $Y$ are ideal-compatible.
\end{lem}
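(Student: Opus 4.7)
By the previous lemma we already have $J_X\otimes J_Y\subseteq J_{X\otimes Y}$, so only the reverse inclusion needs work. Since $Y$ is an imprimitivity bimodule, the left action $\phi_Y\colon B\to\mathcal{K}(Y)$ is a $*$-isomorphism, and in particular $J_Y=B$. The task therefore reduces to showing $J_{X\otimes Y}\subseteq J_X\otimes B$, and by the slice map property for $(J_X,A,B)$ this is equivalent to verifying that $(id_A\otimes\omega)(z)\in J_X$ for every $z\in J_{X\otimes Y}$ and every $\omega\in B^*$.

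To check this I would verify the two defining conditions of $J_X$ in turn. For the compactness condition, set $\widetilde\omega:=\omega\circ\phi_Y^{-1}\in\mathcal{K}(Y)^*$. A routine check on elementary tensors, extended by linearity and continuity, shows that $\phi_X\circ(id_A\otimes\omega)=(id_{\mathcal{L}(X)}\otimes\widetilde\omega)\circ(\phi_X\otimes\phi_Y)$ as maps $A\otimes B\to\mathcal{L}(X)$. Because $z\in J_{X\otimes Y}$, we have $(\phi_X\otimes\phi_Y)(z)\in\mathcal{K}(X\otimes Y)\cong\mathcal{K}(X)\otimes\mathcal{K}(Y)$ by Lemma \ref{TensorOfCompacts}, and the slice map $id\otimes\widetilde\omega$ sends this subspace into $\mathcal{K}(X)$. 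Hence $\phi_X\big((id_A\otimes\omega)(z)\big)\in\mathcal{K}(X)$.

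For the annihilator condition, let $a'\in\ker\phi_X$. Then $(\phi_X\otimes\phi_Y)(a'\otimes b)=0$ for every $b\in B$, so $a'\otimes b\in\ker\phi_{X\otimes Y}$ and therefore $z\cdot(a'\otimes b)=0$ by the defining property of $J_{X\otimes Y}$. Let $\{e_\lambda\}$ be a bounded approximate identity of $B$. Taking $b=e_\lambda$ and rewriting, $(id_A\otimes\omega)\big(z(a'\otimes e_\lambda)\big)=(id_A\otimes\omega_{e_\lambda})(z)\cdot a'$, where $\omega_{e_\lambda}(c):=\omega(ce_\lambda)$. Since $\omega_{e_\lambda}\to\omega$ pointwise with uniformly bounded norms, a standard three-$\varepsilon$ argument using the density of finite sums of elementary tensors in $A\otimes B$ yields $(id_A\otimes\omega_{e_\lambda})(z)\to(id_A\otimes\omega)(z)$ in norm, so $(id_A\otimes\omega)(z)\cdot a'=0$. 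Combined with the first condition this gives $(id_A\otimes\omega)(z)\in J_X$, completing the proof.

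The main obstacle is the second step: passing the approximate-identity limit through the slice map requires mild care because $\omega_{e_\lambda}\to\omega$ only weak-$*$ and not in norm, so one must exploit uniform boundedness. The compactness step, by contrast, is essentially an exercise in functoriality, relying on the fact that $\phi_Y$ being a $*$-isomorphism onto $\mathcal{K}(Y)$ lets us convert $\omega\in B^*$ into $\widetilde\omega\in\mathcal{K}(Y)^*$ and that slice maps preserve the subalgebra $\mathcal{K}(X)\otimes\mathcal{K}(Y)$.
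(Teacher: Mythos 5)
Your proposal is correct and follows essentially the same route as the paper: reduce to $J_{X\otimes Y}\subseteq J_X\otimes B$ via the slice map property (using $J_Y=B$), then verify the two defining conditions of $J_X$ for each slice $(id_A\otimes\omega)(z)$, converting $\omega$ into a functional on $\mathcal K(Y)$ through $\phi_Y^{-1}$ for the compactness condition. The only difference is in the annihilator step, where the paper factors $\omega=b\cdot\omega'$ and applies the slice directly to $c(a\otimes b)=0$, while you reach the same conclusion by a bounded-approximate-identity limit; both arguments are valid.
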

\begin{proof}
It suffices to show that $J_{X\otimes Y}\subseteq J_X\otimes J_Y$. Since
$Y$ is an imprimitivity bimodule we have that the left action $\phi_Y$
is an isomorphism $B\cong\mathcal K(Y)$ and thus $\phi_Y$ maps all
of $B$ maps injectively into $\mathcal K(Y)$, so $J_Y=B$.
We must show that
$J_{X\otimes Y}\subseteq J_X\otimes B$. 

Let $c\in J_{X\otimes Y}$. Since $(J_X,A,B)$ satisfies the slice map property,
showing that $c\in J_X\otimes B$ is equivalent to showing that $(id\otimes\omega)
(c)\in J_X$ for all $\omega\in B^*$. Recalling the definition of $J_X$, this
means we must show that $\phi_X\big((id\otimes \omega)(c)\big)\in\mathcal
K(X)$ and that $(id\otimes\omega)(c)a=0$ for all $a\in ker(\phi_X)$.
With this in mind, let $\omega\in B^*$.
Since $\phi_X$ is linear, we have that
\begin{align*}
\phi_X\big((id_A\otimes\omega)(c)\big)&=\big(\phi_X\otimes\omega\big)(c)\\
&=\big(\phi_X\otimes(\omega\circ\phi_Y^{-1}\circ\phi_Y)\big)(c)\\
&=\big(id_{\mathcal K(X)}\otimes(\omega\circ\phi_Y^{-1})\big)\circ
(\phi_X\otimes\phi_Y)(c)\\
&=\big(id_{\mathcal K(X)}\otimes(\omega\circ\phi_Y^{-1})\big)\circ\phi_{X\otimes
Y}(c)
\end{align*}
To see that this is in $\mathcal K(X)$ note that since $c\in J_{X\otimes
Y}$
by assumption, we know that $\phi(c)\in\mathcal
K(X\otimes Y)=\mathcal K(X)\otimes\mathcal K(Y)$. Note that since $Y$ is
an imprimitivity bimodule, $\phi^{-1}_Y$ is well-defined as a map
$\mathcal K(Y)\to B$.
Since $\big(id_{\mathcal
K(X)}\otimes(\omega\circ\phi_Y^{-1})\big)$ maps $\mathcal K(X)\otimes\mathcal
K(Y)\to\mathcal K(X)$ we have that 
$$\big(id_{\mathcal K(X)}\otimes(\omega\circ\phi_Y^{-1})\big)\circ\phi_{X\otimes
Y}(c)\in \mathcal K(X)$$
so $(id\otimes\omega)(c)\in\mathcal K(X)$.

Next, let $a\in ker(\phi_X)$ and factor $\omega$ as $b\cdot\omega'$
for some $b\in B$ and $\omega'\in B^*$ (where $(b\cdot\omega')(b')=\omega'(b'b)$).
Then
\begin{align*}
(id\otimes\omega)(c)a&=(id\otimes\omega)\big(c(a\otimes 1)\big)\\
&=(id\otimes b\cdot\omega')\big(c(a\otimes1)\big)\\
&=(id\otimes\omega')\big(c(a\otimes 1)(1\otimes b)\big)\\
&=(id\otimes\omega')\big(c(a\otimes b)\big)
\end{align*}
but 
\begin{align*}
a\otimes b&\in ker(\phi_X)\otimes B\\
&\subseteq ker(\phi_X\otimes\phi_Y)\\
&=ker(\phi_{X\otimes Y})
\end{align*}
Therefore, since $c\in J_{X\otimes Y}$ we must have $c(a\otimes b)=0$ and
hence $(id\otimes\omega)(c)a=(id\otimes\omega')\big(c(a\otimes b)\big)=0$.
Thus, we have established  that $(id\otimes\omega)(c)\in J_X$ for any $c\in
J_{X\otimes Y}$ and $\omega\in B^*$ so by the slice map property we have
that $J_{X\otimes Y}\subseteq J_X\otimes B=J_X\otimes J_Y$ and thus
(by the previous lemma) $J_{X\otimes Y}=J_X\otimes J_Y$.
\end{proof}
In Example 8.13 of \cite{GraphAlgebraBook},
it is shown that if $E$ is a discrete graph, then
$$J_{X(E)}=\overline{span}\{\delta_v:0<|r^{-1}(v)|<\infty\}$$
where $X(E)$ is the associated correspondence and $\delta_v\in c_0(E^0)$
denotes the characteristic function of the vertex $v\in E^0$. With this in
mind, we give the following proposition:
\begin{prop}\label{GraphCompatible}
Let $E$ and $F$ be discrete graphs and let $X=X(E)$ and $Y=X(F)$ be
the associated correspondences. Then $X$ and $Y$ are ideal compatible.
\end{prop}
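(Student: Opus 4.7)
The plan is to reduce the problem to a direct computation using the explicit description of the Katsura ideal for graph correspondences cited just before the proposition, combined with the identification $X(E)\otimes X(F)\cong X(E\times F)$ from Example \ref{TensorTopGraph}.

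First I would unpack what $J_{X\otimes Y}$ looks like. By Example \ref{TensorTopGraph}, there is a correspondence isomorphism $X(E)\otimes X(F)\cong X(E\times F)$, where the product graph has vertex set $E^0\times F^0$ and range map $r\times r'$. Since Cuntz--Pimsner data (and hence the Katsura ideal) is an isomorphism invariant of correspondences, $J_{X\otimes Y}$ is identified with $J_{X(E\times F)}$, which by Example 8.13 of \cite{GraphAlgebraBook} equals
\[
\overline{\operatorname{span}}\{\delta_{(v,w)} : 0<|(r\times r')^{-1}(v,w)|<\infty\}\subseteq c_0(E^0\times F^0).
\]
The key combinatorial fact is that $(r\times r')^{-1}(v,w)=r^{-1}(v)\times (r')^{-1}(w)$, so this preimage is nonempty and finite if and only if $r^{-1}(v)$ and $(r')^{-1}(w)$ are each nonempty and finite.

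Next I would compute $J_X\otimes J_Y$ and match it up. Using the description of $J_{X(E)}$ and $J_{X(F)}$, we have
\[
J_X\otimes J_Y=\overline{\operatorname{span}}\{\delta_v:0<|r^{-1}(v)|<\infty\}\otimes \overline{\operatorname{span}}\{\delta_w:0<|(r')^{-1}(w)|<\infty\}
\]
inside $c_0(E^0)\otimes c_0(F^0)\cong c_0(E^0\times F^0)$. Under the standard isomorphism $\rho$ of Example \ref{TensorTopGraph}, $\delta_v\otimes \delta_w\mapsto \delta_{(v,w)}$, so this is exactly the closed span of $\{\delta_{(v,w)}\}$ over the index set characterized by the combinatorial condition above. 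Comparing with the previous paragraph, the two ideals coincide.

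Finally, I would note that one containment is already given by the earlier lemma $J_X\otimes J_Y\subseteq J_{X\otimes Y}$, so technically only the reverse inclusion $J_{X\otimes Y}\subseteq J_X\otimes J_Y$ needs checking, though the computation above yields both. I do not expect any substantive obstacle here: the only mildly subtle point is making sure that the isomorphism $X(E)\otimes X(F)\cong X(E\times F)$ carries the coefficient algebra isomorphism $\rho:c_0(E^0)\otimes c_0(F^0)\to c_0(E^0\times F^0)$, which was already verified in Example \ref{TensorTopGraph}. Everything else is bookkeeping with characteristic functions and preimages.
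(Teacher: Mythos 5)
Your proposal is correct and follows essentially the same route as the paper's own proof: identify $X(E)\otimes X(F)$ with $X(E\times F)$, use the Example~8.13 description of the Katsura ideal for graph correspondences, and observe that $(r\times r')^{-1}(v,w)=r^{-1}(v)\times (r')^{-1}(w)$ is nonempty and finite exactly when both factors are, so the two closed spans of characteristic functions coincide under $\delta_v\otimes\delta_w\mapsto\delta_{(v,w)}$. The only difference is cosmetic: you explicitly invoke the earlier containment $J_X\otimes J_Y\subseteq J_{X\otimes Y}$, which the direct computation renders unnecessary.
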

\begin{proof} 
Recall that $X\otimes Y=X(E\times F)$. Thus
$$J_{X\otimes Y}=\overline{span}\{\delta_{v\times
w}:0<|r_{E\times F}^{-1}(v\times w)|<\infty\}$$
By definition, $r_{E\times F}=r_E\times r_F$ so $r_{E\times F}^{-1}
(v\times w)=r^{-1}_E(v)\times r^{-1}_F(w)$ and thus $|r^{-1}_{E\times F}(v\times
w)|=|r_E^{-1}(v)|\cdot|r^{-1}_F(w)|$ but $0<|r_E^{-1}(v)|\cdot|r^{-1}_F(w)|<
\infty$ if and only if $0<|r_E^{-1}(v)|<\infty$ and $0<|r^{-1}_F(w)|<\infty$.
Thus we have that
$$J_{X\otimes Y}=\overline{span}\{\delta_{v\times
w}:0<|r_E^{-1}(v)|,|r^{-1}_F(w)|<\infty\}$$
Since $\delta_{v\times w}=\delta_v\delta_w$, if we identify $c_0(E^0\times
F^0)$ with $c_0(E^0)\otimes c_0(F^0)$ in the standard way, we see that 
$\delta_{v\times x}=\delta_v\otimes\delta_w$. Thus
\begin{align*}
J_{X\otimes Y}&=\overline{span}\{\delta_v\otimes
\delta_w:0<|r_E^{-1}(v)|,|r^{-1}_F(w)|<\infty\}\\
&=\overline{span}\{f\otimes g:f\in J_X,g\in J_Y\}\\
&=J_X\otimes J_Y
\end{align*}
Therefore, $X$ and $Y$ are ideal-compatible.
\end{proof}

\begin{dfn}
Let $X$ be a correspondence over a $C^*$ algebra $A$. We will call
this correspondence \emph{Katsura nondegenerate} if $X\cdot J_X=X$.
\end{dfn}

\begin{ex}
Let $X$ be a correspondence over a $C^*$-algebra $A$ such that the
left action is injective and implemented by compacts. In this case
we have that $J_X=A$. Thus:
\begin{align*}
X\cdot J_X&=X\cdot A\\
&=X
\end{align*}
\end{ex}
\begin{dfn}
Recall that a vertex in a directed graph is called a \emph{source}
if it receives no edges. We will call such a vertex a \emph{proper
source} if it emits at least one edge.
\end{dfn}
\begin{prop}
Let $E$ be a directed graph. Then $X(E)$ is Katsura nondegenerate if
and only if $E$ has no proper sources and no infinite receiver emits an edge.
\end{prop}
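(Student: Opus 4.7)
The plan is to translate the condition $X(E)\cdot J_X = X(E)$ into a combinatorial condition on $E$ by computing both sides explicitly and then comparing.

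First I would recall from the remark preceding Proposition \ref{GraphCompatible} that $J_{X(E)} = \overline{\mathrm{span}}\{\delta_v : 0 < |r^{-1}(v)| < \infty\}$, and set $V_0 := \{v \in E^0 : 0 < |r^{-1}(v)| < \infty\}$. Using the formula $(x\cdot a)(e) = x(e)a(s(e))$, a direct computation gives $(x\cdot \delta_v)(e) = x(e)$ if $s(e)=v$ and $0$ otherwise. It follows that
\[
X(E)\cdot J_X = \overline{\{f \in c_c(E^1) : \mathrm{supp}(f)\subseteq s^{-1}(V_0)\}},
\]
with closure in the Hilbert module norm. So the question reduces to: when is every element of $X(E)$ approximable by functions supported on edges emanating from $V_0$?

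For the $(\Leftarrow)$ direction, assume there are no proper sources and no infinite receiver emits an edge. Given any edge $e\in E^1$, its source $v = s(e)$ emits at least one edge (namely $e$), so the first hypothesis forces $v$ not to be a source, i.e.\ $|r^{-1}(v)|>0$; the second hypothesis forces $|r^{-1}(v)|<\infty$. Thus $s(e)\in V_0$ for every $e\in E^1$, so $c_c(E^1) \subseteq X(E)\cdot J_X$, and the latter is therefore all of $X(E)$.

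For $(\Rightarrow)$, I would prove the contrapositive. Suppose either there is a proper source or there is an infinite receiver emitting an edge; in either case there exists an edge $e$ whose source $v=s(e)$ satisfies $v\notin V_0$ (because $|r^{-1}(v)|$ is either $0$ or $\infty$). Take the point-mass $\delta_e \in c_c(E^1)\subseteq X(E)$. For any $f$ supported on $s^{-1}(V_0)$, we have $f(e) = 0$ since $s(e)=v\notin V_0$, and so evaluating the inner product at $v$ gives
\[
\langle \delta_e - f,\, \delta_e - f\rangle(v) = |1 - f(e)|^2 + \sum_{\substack{e'\neq e \\ s(e')=v}} |f(e')|^2 \geq 1.
\]
Hence $\|\delta_e - f\| \geq 1$ for every such $f$, which shows $\delta_e \notin X(E)\cdot J_X$, contradicting Katsura nondegeneracy. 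The main thing requiring care is confirming that the membership condition $v\in V_0$ correctly captures both excluded cases (sources vs.\ infinite receivers) via the single computation above; beyond that the argument is straightforward.
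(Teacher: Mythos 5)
Your proof is correct and follows essentially the same route as the paper: both use the description $J_{X(E)}=\overline{\mathrm{span}}\{\delta_v:0<|r^{-1}(v)|<\infty\}$, both verify nondegeneracy via $\delta_e=\delta_e\cdot\delta_{s(e)}$ when every source of an edge lies in $V_0$, and both exhibit $\delta_e$ (for $e$ emitted by a proper source or an infinite receiver) as an element not approximable from $X\cdot J_X$. The only cosmetic differences are that you merge the paper's two bad cases into the single condition $s(e)\notin V_0$ and make the norm estimate explicit where the paper appeals to continuity of evaluation at an edge.
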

\begin{proof}
Suppose there is $v\in E^0$ such that $|r^{-1}(v)|=\infty$ and
$|s^{-1}(v)|>0$. Then for every $f\in J_X$ we have $f(v)=0$. Thus
for any $g\in C_c(E^1)$, $f\in J_X$, and $e\in s^{-1}(v)$, 
we have $(g\cdot f)(e)
=g(e)f\big(s(e)\big)=g(e)f(v)=0$. Thus $h(e)=0$ for all $h\in C_c(E^1)
\cdot J_X$ and, taking the limit, $x(e)=0$ for all $x\in X\cdot J_X$. 
Thus $\delta_e\notin X\cdot J_X$ since $\delta_e(e)=1\neq
0$ but $\delta_e\in X$. Therefore $X\neq X\cdot J_X$, i.e. $X$ is not
Katsura nondegenerate.

Similarly, suppose that $E$ has a proper source $v$. Then, since $|r^{-1}(v)|=0$
we must have $f(v)=0$ for all $f\in J_X$. Then for any $g\in C_c(E^1)$ and
$e\in s^{-1}(v)$ we have that $(g\cdot f)(e)=g(e)f(v)=0$ for $f\in J_X$.
Thus by similar reasoning as above we have that $x(e)=0$ for all $x\in
X\cdot J_X$ and so $\delta_e\notin X\cdot J_X$ but $\delta_e\in X$ and
we can again conclude that $X\neq X\cdot J_X$ so $X$ is not Katsura
nondegenerate.

On the other hand, suppose $E$ has no proper sources and no infinite receiver
in $E$ emits an edge.
Let $e\in E^1$ and let $v=s(e)$. Then $|r^{-1}(v)|<\infty$ 
and $|r^{-1}(v)|>0$ by assumption, so function in $J_X$ can be 
supported on $v$. In particular, $\delta_v\in J_X$. Since $\delta_e
\cdot\delta_v=\delta_e$ we know that $\delta_e\in X\cdot J_X$.
Since $e$ was arbitrary, we have that all such characteristic functions
are contained in $X\cdot J_X$. But these functions densely span $C_c(E^1)$
and thus densely span $X$,  so we have that $X\subseteq X\cdot J_X$ and therefore
$X=X\cdot J_X$ so $X$ is Katsura nondegenerate.
 
\end{proof}

\section{Main Result}
We will begin with a few lemmas:
\begin{lem} 
Let $X$ and $Y$ be correspondences over $C^*$-algebras
$A$ and $B$ respectively. Suppose $(\pi_X,\psi_X)$ and $(\pi_Y,\psi_Y)$
are Toeplitz representations of $X$ and $Y$ in $C^*$-algebras $C$ and $D$.
Let $\pi:=\pi_X\otimes\pi_Y$ and $\psi:=\psi_X\otimes\psi_Y$
Then $(\pi,\psi)$ is a Toeplitz representation of $X\otimes Y$ in 
$C\otimes D$.
\end{lem}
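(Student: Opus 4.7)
The plan is to verify the three Toeplitz axioms on elementary tensors and then extend by linearity and continuity. Before doing this, I need to make sense of $\pi$ and $\psi$ as well-defined maps on the (minimal) tensor products.

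First, $\pi = \pi_X \otimes \pi_Y : A \otimes B \to C \otimes D$ is a $*$-homomorphism by the standard fact that a tensor product of $*$-homomorphisms between $C^*$-algebras extends to the minimal tensor product. For $\psi$, I would start by defining $\psi = \psi_X \otimes \psi_Y$ on the algebraic tensor product $X \odot Y$ via $\psi(x \otimes y) = \psi_X(x) \otimes \psi_Y(y)$ and show it extends continuously to $X \otimes Y$. The key computation, for an element $u = \sum_i x_i \otimes y_i \in X \odot Y$, is
\begin{align*}
\psi(u)^* \psi(u) &= \sum_{i,j} \psi_X(x_i)^* \psi_X(x_j) \otimes \psi_Y(y_i)^* \psi_Y(y_j) \\
&= \sum_{i,j} \pi_X(\langle x_i, x_j\rangle_A) \otimes \pi_Y(\langle y_i, y_j\rangle_B) \\
&= (\pi_X \otimes \pi_Y)\Bigl(\sum_{i,j} \langle x_i, x_j\rangle_A \otimes \langle y_i, y_j\rangle_B\Bigr),
\end{align*}
using axiom 3 for each of $(\psi_X, \pi_X)$ and $(\psi_Y, \pi_Y)$. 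The expression inside $\pi_X \otimes \pi_Y$ is exactly $\langle u, u\rangle_{A \otimes B}$ computed with the external-tensor-product inner product, so $\|\psi(u)\|^2 \le \|u\|^2$ and $\psi$ extends to a contraction $X \otimes Y \to C \otimes D$.

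With $\pi$ and $\psi$ defined, I would verify the three axioms on elementary tensors $a = a_1 \otimes a_2 \in A \otimes B$ and $x = x_1 \otimes x_2, y = y_1 \otimes y_2 \in X \otimes Y$. Each reduces to applying the corresponding axiom for $(\psi_X, \pi_X)$ and $(\psi_Y, \pi_Y)$ in each factor. For instance,
\begin{align*}
\pi(\langle x, y\rangle_{A\otimes B}) &= \pi_X(\langle x_1, y_1\rangle_A) \otimes \pi_Y(\langle x_2, y_2\rangle_B) \\
&= \psi_X(x_1)^* \psi_X(y_1) \otimes \psi_Y(x_2)^* \psi_Y(y_2) \\
&= \psi(x)^* \psi(y),
\end{align*}
and the left/right module conditions are similar. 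Extending by linearity gives the axioms for finite sums of elementary tensors, and continuity of $\pi$ and $\psi$ together with continuity of the module actions, inner products, and involution promotes them to all of $A \otimes B$ and $X \otimes Y$.

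The main obstacle I anticipate is simply the well-definedness/continuous extension of $\psi$ on the completion $X \otimes Y$, since $\psi_X$ and $\psi_Y$ are only linear (not algebra homomorphisms), so one cannot appeal directly to the usual $C^*$-tensor-product machinery. The norm estimate above handles this, but it is the only step that genuinely uses the Toeplitz axioms rather than formal manipulation; once that is in place, the remaining verifications are routine checks on elementary tensors.
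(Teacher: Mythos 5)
Your proof is correct and follows essentially the same route as the paper: verify the three Toeplitz axioms factorwise on elementary tensors and extend by linearity and continuity. The only difference is that you additionally justify the continuous extension of $\psi = \psi_X \otimes \psi_Y$ to the completed external tensor product via the norm estimate $\|\psi(u)\|^2 = \|\pi(\langle u,u\rangle_{A\otimes B})\| \le \|u\|^2$, a detail the paper leaves implicit, and which you handle correctly.
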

\begin{proof}
This follows from the following computations:
\begin{align*}
\psi\big((x\otimes y)\cdot (a\otimes b)\big)
&=\psi_X(x\cdot a)\otimes\psi_Y(y\cdot b)\\
&=\psi_X(x)\pi_X(a)\otimes\psi_Y(y)\pi_Y(b)\\
&=\psi(x\otimes y)\pi(a\otimes b)
\end{align*}
\begin{align*}
\psi\big((a\otimes b)\cdot (x\otimes y)\big)
&=\psi_X(a\cdot x)\otimes\psi_Y(b\cdot y)\\
&=\pi_X(a)\psi_X(x)\otimes\pi_Y(b)\psi_Y(y)\\
&= \pi(a\otimes b)\psi(x\otimes y)
\end{align*}

\begin{align*}
\psi(x\otimes y)^*\psi(x'\otimes y') &= \psi_X(x)^*\psi_X(x')\otimes\psi_Y
(y)^*\psi_Y(y')\\ 
&= \pi_X\big(\langle x, x'\rangle_A\big)\otimes\pi_Y
\big(\langle y,y'\rangle_B\big)\\
&=\pi\big(\langle x,x'\rangle_A\otimes\langle y,y'\rangle_B\big)\\
&=\pi\big(\langle x\otimes y, x'\otimes y'\rangle_{A\otimes B}\big)
\end{align*}
\end{proof}
\begin{lem}
If $(\pi,\psi)$ is the Toeplitz representation of $X\otimes Y$
in the previous lemma, then 
$$\psi^{(1)}(\kappa(S\otimes T))=\psi^{(1)}_X(S)\otimes\psi^{(1)}_Y(T)$$
where $\kappa$ is as in Lemma \ref{TensorOfCompacts}.
\end{lem}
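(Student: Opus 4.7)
The plan is to reduce the claim to a computation on rank-one operators, since the rank-ones densely span $\mathcal{K}(X)$ and $\mathcal{K}(Y)$ (and hence their elementary tensors densely span $\mathcal{K}(X)\otimes\mathcal{K}(Y)$), and both sides of the proposed identity are continuous and linear (resp.\ bilinear) in their arguments. Thus it suffices to verify the formula on a pair $S=\Theta_{x,x'}\in\mathcal{K}(X)$ and $T=\Theta_{y,y'}\in\mathcal{K}(Y)$.

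First I would identify $\kappa(\Theta_{x,x'}\otimes\Theta_{y,y'})$ explicitly. By the definition of $\kappa$ in Lemma \ref{TensorOfCompacts}, this operator sends $z\otimes w\in X\otimes Y$ to
\[
\Theta_{x,x'}(z)\otimes\Theta_{y,y'}(w)=\bigl(x\cdot\langle x',z\rangle_A\bigr)\otimes\bigl(y\cdot\langle y',w\rangle_B\bigr)=(x\otimes y)\cdot\bigl(\langle x',z\rangle_A\otimes\langle y',w\rangle_B\bigr),
\]
which equals $(x\otimes y)\cdot\langle x'\otimes y',z\otimes w\rangle_{A\otimes B}=\Theta_{x\otimes y,\,x'\otimes y'}(z\otimes w)$. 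Hence $\kappa(\Theta_{x,x'}\otimes\Theta_{y,y'})=\Theta_{x\otimes y,\,x'\otimes y'}$ as operators on $X\otimes Y$.

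Next I would apply the defining formula for the homomorphism $\psi^{(1)}$ on rank-ones, namely $\psi^{(1)}(\Theta_{\xi,\eta})=\psi(\xi)\psi(\eta)^*$ (which follows from part (2) of the property recalled from Lemma 2.4 of \cite{Katsura1}, together with condition (3) of a Toeplitz representation). This gives
\[
\psi^{(1)}\bigl(\kappa(\Theta_{x,x'}\otimes\Theta_{y,y'})\bigr)=\psi(x\otimes y)\,\psi(x'\otimes y')^*,
\]
and using the previous lemma's definition $\psi=\psi_X\otimes\psi_Y$ we rewrite this as
\[
\bigl(\psi_X(x)\otimes\psi_Y(y)\bigr)\bigl(\psi_X(x')^*\otimes\psi_Y(y')^*\bigr)=\psi_X(x)\psi_X(x')^*\otimes\psi_Y(y)\psi_Y(y')^*=\psi^{(1)}_X(\Theta_{x,x'})\otimes\psi^{(1)}_Y(\Theta_{y,y'}),
\]
establishing the identity on elementary tensors of rank-one operators.

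Finally, I would extend by linearity in each argument to the algebraic tensor product of finite sums of rank-ones, and then by continuity of $\psi^{(1)}$, $\kappa$, $\psi^{(1)}_X$, and $\psi^{(1)}_Y$ (each of which is a norm-decreasing $*$-homomorphism or a $C^*$-isomorphism), pass to the closure $\mathcal{K}(X)\otimes\mathcal{K}(Y)$. The main thing to be careful about is the order of the density and continuity arguments---specifically, that both sides of the asserted equality are genuinely continuous in $S\otimes T$ with respect to the $C^*$-norm on $\mathcal{K}(X)\otimes\mathcal{K}(Y)$---but since all maps in sight are bounded $*$-homomorphisms, this presents no real obstacle.
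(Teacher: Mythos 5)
Your proposal is correct and follows essentially the same route as the paper: identify $\kappa(\Theta_{x,x'}\otimes\Theta_{y,y'})$ with $\Theta_{x\otimes y,\,x'\otimes y'}$, apply $\psi^{(1)}(\Theta_{\xi,\eta})=\psi(\xi)\psi(\eta)^*$, split via $\psi=\psi_X\otimes\psi_Y$, and extend by linearity and continuity from rank-one operators to all of $\mathcal K(X)\otimes\mathcal K(Y)$. The only difference is that you spell out the verification of the $\kappa$ identity and the density argument in more detail than the paper does.
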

\begin{proof}
Let $x,x'\in X$ and $y,y'\in Y$. Then:
\begin{align*}
\psi^{(1)}(\kappa(\Theta_{x,x'}\otimes\Theta_{y,y'}))
&=\psi^{(1)}(\Theta_{x\otimes y, x'\otimes y'})\\
&=\psi(x\otimes y)
\psi(x'\otimes y')^*\\
&=(\psi_X(x)\otimes\psi_Y(y))(\psi_X(x)\otimes \psi_Y(y))^*\\
&=\psi_X(x)\psi_X(x')^*\otimes\psi_Y(y)\psi_Y(y')^*\\
&=\psi^{(1)}_X(\Theta_{x,x'})\otimes\psi^{(1)}_Y(\Theta_{y,y'})
\end{align*}
Since the rank-ones have dense span in the compacts, this result
extends to any $S\in\mathcal K(X)$ and $T\in\mathcal K(Y)$.
\end{proof}
\begin{lem}\label{CPCovariant}
Let $X$ and $Y$ be ideal-compatible correspondences over $A$ and $B$. 
Then if $(\pi_X,\psi_X)$ and $(\pi_Y,\psi_Y)$ are Cuntz-Pimsner
covariant, then so is $(\pi,\psi)$.
\end{lem}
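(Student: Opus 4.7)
The plan is to verify the Cuntz–Pimsner covariance condition $\psi^{(1)}\bigl(\phi_{X\otimes Y}(c)\bigr) = \pi(c)$ for every $c \in J_{X\otimes Y}$, exploiting ideal compatibility to reduce to elementary tensors and then to combine the two covariances multiplicatively via the previous two lemmas.

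First I would invoke ideal compatibility to rewrite the covariance condition: since $J_{X\otimes Y} = J_X \otimes J_Y$, and both $\psi^{(1)}\circ \phi_{X\otimes Y}$ and $\pi$ are bounded linear maps on $J_{X\otimes Y}$, it suffices to check equality on elementary tensors $a\otimes b$ with $a\in J_X$ and $b\in J_Y$. For such a tensor, the left action decomposes as $\phi_{X\otimes Y}(a\otimes b) = \phi_X(a)\otimes \phi_Y(b)$, which under the identification $\kappa \colon \mathcal K(X)\otimes \mathcal K(Y) \xrightarrow{\sim} \mathcal K(X\otimes Y)$ of Lemma \ref{TensorOfCompacts} is exactly $\kappa\bigl(\phi_X(a)\otimes \phi_Y(b)\bigr)$; note that $\phi_X(a)\in\mathcal K(X)$ and $\phi_Y(b)\in\mathcal K(Y)$ precisely because $a\in J_X$ and $b\in J_Y$, so this identification is legitimate.

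Next I would apply the preceding lemma, which computes
\[
\psi^{(1)}\bigl(\kappa(\phi_X(a)\otimes \phi_Y(b))\bigr) = \psi_X^{(1)}\bigl(\phi_X(a)\bigr)\otimes \psi_Y^{(1)}\bigl(\phi_Y(b)\bigr).
\]
Now the hypothesis that $(\pi_X,\psi_X)$ and $(\pi_Y,\psi_Y)$ are Cuntz–Pimsner covariant gives $\psi_X^{(1)}(\phi_X(a)) = \pi_X(a)$ and $\psi_Y^{(1)}(\phi_Y(b)) = \pi_Y(b)$. Substituting, the right hand side becomes $\pi_X(a)\otimes \pi_Y(b) = \pi(a\otimes b)$, which is exactly the desired equality on elementary tensors. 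Extending by linearity and continuity from the dense subspace spanned by elementary tensors to all of $J_X\otimes J_Y = J_{X\otimes Y}$ completes the proof.

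The only real subtlety — and what makes ideal compatibility the essential hypothesis — is in the very first reduction: without the identification $J_{X\otimes Y} = J_X\otimes J_Y$, an arbitrary element of $J_{X\otimes Y}$ need not be approximable by elementary tensors $a\otimes b$ with $a\in J_X$ and $b\in J_Y$, and the elementwise argument above would not close up. Given ideal compatibility, everything else is a direct concatenation of the previous two lemmas with the individual covariance hypotheses, so I do not expect any further obstacle.
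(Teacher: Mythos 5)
Your proposal is correct and follows essentially the same route as the paper: reduce via $J_{X\otimes Y}=J_X\otimes J_Y$ to (sums of) elementary tensors, apply the preceding lemma to get $\psi^{(1)}\bigl(\kappa(\phi_X(a)\otimes\phi_Y(b))\bigr)=\psi_X^{(1)}(\phi_X(a))\otimes\psi_Y^{(1)}(\phi_Y(b))$, invoke the covariance of the two factors, and finish by linearity and continuity. The paper phrases the last step as an approximation argument with finite sums, which is the same continuity argument you make explicit.
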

\begin{proof}
Let $c\in J_{X\otimes Y}$
Since $X$ an $Y$ are ideal-compatible we have that $J_{X\otimes Y}
= J_X\otimes J_Y$ so we can approximate $c$ by a finite sum 
$\sum_i a_i\otimes b_i$ with $a_i\in J_X$ and $b_i\in J_Y$ for each $i$.
 
Thus:
\begin{align*}
\psi^{(1)}(\phi(c))&\approx\psi^{(1)}\left(\phi\left(\sum_ia_i\otimes b_i\right)
\right)\\
&=\psi^{(1)}\left(\sum_i\phi(a_i\otimes b_i)\right)\\
&=\psi^{(1)}\left(\sum_i\kappa\big(\phi_X(a_i)\otimes \phi_Y(b_i)\big)\right)\\
&=\sum_i\psi^{(1)}_X\big(\phi_X(a_i)\big)\otimes\psi^{(1)}_Y\big(\phi_Y
(b_i)\big)\\
&=\sum_i\pi_X(a_i)\otimes\pi_Y(b_i)\\
&=\sum_i\pi(a_i\otimes b_i)\\
&\approx\pi(c)
\end{align*}
where we have used the Cuntz-Pimsner covariance of $(\pi_X,\psi_X)$ and
$(\pi_Y,\psi_Y)$. This establishes that
$(\pi,\psi)$ is Cuntz-Pimsner covariant.
\end{proof}
We are now ready to prove the main result of this paper:
\begin{thm} Let $X$ and $Y$ be ideal-compatible correspondences over 
$C^*$-algebras $A$ and $B$. 
Then $\mathcal{O}_{X\otimes Y}$ can be faithfully imbedded in $\mathcal{O}_X
\otimes_\mathbb T\mathcal{O}_Y$. If $X$ and $Y$ are Katsura nondegenerate,
then $\mathcal O_{X\otimes Y}\cong\mathcal O_X\otimes_\mathbb T\mathcal O_Y$.
\begin{proof}
We will begin by showing the existence of a homomorphism 
$\mathcal{O}_{X\otimes Y}\to \mathcal{O}_X\otimes\mathcal{O}_Y$. 
To show this, we will construct a Cuntz-Pimsner covariant representation
of $X\otimes Y$ in $\mathcal{O}_X\otimes\mathcal{O}_Y$ and then apply
the universal property of Cuntz-Pimsner algebras.

Let $(k_X, k_A)$ and $(k_Y, k_A)$ be the Cuntz-Pimsner covariant
representations of $X$ and $Y$ in $\mathcal{O}_X$ and $\mathcal{O}_Y$ respectively.
Let $\psi=k_X\otimes k_Y$ and $\pi=k_A\otimes k_B$. Then by 
Lemma \ref{CPCovariant},
$(\psi,\pi)$ is Cuntz-Pimsner covariant and so we have a homomorphism
$F:\mathcal O_{X\otimes Y}\to\mathcal O_X\otimes\mathcal O_Y$
such that $$(\psi,\pi)=F\circ(k_{X\otimes Y},k_{A\otimes B})$$
In particular, 
\begin{align}
F(k_{A\otimes B}(A\otimes B))&=\pi(A\otimes B)=
(k_A\otimes k_B)(A\otimes B)\\
F(k_{X\otimes Y}(X\otimes Y))&=\psi(X\otimes Y)=(k_X\otimes
k_Y)(X\otimes Y)
\end{align} 
Let $\{\mathcal O_X^n\}_{n\in\mathbb Z}$ and $\{\mathcal O_Y^n\}_
{n\in\mathbb Z}$ denote the $\mathbb Z$-gradings of $\mathcal O_X$
and $\mathcal O_Y$ associated to the standard gauge actions $\gamma_X$ and
$\gamma_Y$. Then by Proposition~\ref{l1}, the subspaces 
$$S_n:=\{x\otimes y:x\in\mathcal O_X^n, y\in\mathcal O_Y^n\}$$
give a $\mathbb Z$-grading of $\mathcal O_X\otimes_\mathbb T \mathcal O_Y$.
Since (1) shows that $\pi(A\otimes B)\subseteq S_0$ and (2) shows that
$\psi(X\otimes Y)\subseteq S_1$, we can see that $C^*(\psi,\pi)\subseteq
\mathcal O_X\otimes_\mathbb T \mathcal O_Y$ and 
that the action of $\mathbb T$
on $\mathcal O_X\otimes_\mathbb T \mathcal O_Y$ guaranteed 
by Lemma \ref{ActionFromGrading} is a gauge action:
\begin{align*}
\gamma_z(\pi(c))&=\pi(c) & c&\in A\otimes B\\
\gamma_z(\psi(w))&=z\psi(w) & w&\in X\otimes Y
\end{align*}
Also, since $k_A, k_B,k_X$ and $k_Y$ are injective, 
$\pi=k_A\otimes k_B$ and $\psi=k_X\otimes k_Y$ are injective
too. Hence by the gauge invariant uniqueness theorem, $F$ is injective. Thus we have established the first part of the theorem.

Now suppose $X$ and $Y$ are Katsura nondegenerate. We will show
that $(\psi,\pi)$ generates $\mathcal O_X\otimes_\mathbb T\mathcal O_Y$
by showing that $(\psi,\pi)$ generates $S_1$ and applying Lemma \ref{l3}.
Since $\mathcal O_X^1$ is densely spanned
by elements of the form:
$k_X^{n+1}(x) k^n_X(x')^*$
and $\mathcal O_Y^1$ is densely spanned by elements of the form
$k_Y^{n+1}(y)k^n_Y(y')^*$
we have that $S_1$ is densely spanned by elements of the form
\begin{align}
k^{n+1}_X(x)k^n_X(x')^*\otimes k_Y^{m+1}(y) k^m_Y(y')^*
\end{align} 
By symmetry, we may assume $m=n+l$ for some nonnegative $l$.
Then we may assume that
$y=y_1\otimes y_2$ and $y'=y'_1\otimes y'_2$
with $y_1,y'_1\in Y^{\otimes n}$ and $y_2,y'_2\in Y^{\otimes l}$.
Further, since $X$ is Katsura nondegenerate, we can factor $x=x_0a$
and $x'=x'_0a'$ with $x_0,x'_0\in X$ and $a,a'\in J_X$.
Now we can factor (3) as follows:
\begin{align*}
k&^{n+1}_X(x)k^n_X(x')^*\otimes k_Y^{m+1}(y) k^m_Y(y')^*\\
&=\big(k^{n+1}_X(x)\otimes k^{m+1}_Y(y)\big)\big(k^n_X(x')^*\otimes k^m_Y
(y')^*\big)\\
&=\big(k^{n+1}_X(x_0)k_A(a)\otimes k^{n+1}_Y(y_1)k_Y^l(y_2)\big)
\big(k_A(a')^*k^n_X(x'_0)^*
\otimes k^l_Y(y'_2)\big)^*k^n_Y(y'_1)^*\big)\\
&=\big(k^{n+1}_X(x_0)\otimes k^{n+1}_Y(y_1)\big)\big(k_A(a)\otimes k^l_Y(y_2)\big)
\big(k_A(a')^*\otimes k^l_Y(y'_2)^*\big)\big(k^n_X(x'_0)^*\otimes k^n_Y
(y'_1)^*\big)\\
&=\big(k^{n+1}_X(x_0)\otimes k^{n+1}_Y(y_1)\big)\big(k_A(aa'^*)\otimes k^l_Y(y_2)
k^l_Y(y'_2)^*\big)\big(k^n_X(x'_0)^*\otimes k^n_Y
(y'_1)^*\big)\\
&=\big(k^{n+1}_X(x_0)\otimes k^{n+1}_Y(y_1)\big)\big(k_X^{(1)}(\phi_X(aa'^*))
\otimes k_Y^{(1)}
(\Theta_{y_2,y'_2})\big)\big(k^n_X(x')^*\otimes k^n_Y(y'_1)^*\big)\\
&=\psi^{n+1}(x_0\otimes y_1)\big(\psi^{(1)}(\phi_X(aa')\otimes\Theta_{y_2,y'_2})\big)
\psi^n(x'_0\otimes y'_1)^*
\end{align*} 
Since $\psi^{n+1}(x_0\otimes y_1)$, $\psi^{(1)}(\phi_X(aa'^*)\otimes\Theta_{y_2,y'_2})$,
and $\psi^n(x'_0\otimes y'_1)$ are in the algebra generated by $(\psi,\pi)$,
we now know that $(\psi,\pi)$ generates
$S_1$ and so by Lemma~\ref{l3} $(\psi,\pi)$ generates all of 
$\mathcal O_X\otimes_\mathbb T\mathcal O_Y$. 
Therefore $F$ is surjective hence an isomorphism
$\mathcal O_{X\otimes Y}\cong\mathcal O_X\otimes_\mathbb T\mathcal O_Y$.
\end{proof}
\end{thm}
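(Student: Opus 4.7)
The plan is to build the embedding via the universal property of $\mathcal{O}_{X\otimes Y}$, check injectivity by gauge-invariant uniqueness applied to the $\mathbb{Z}$-grading inherited from $\mathcal{O}_X\otimes_{\mathbb{T}}\mathcal{O}_Y$, and finally chase generators for surjectivity under Katsura nondegeneracy. To start, let $(k_X,k_A)$ and $(k_Y,k_B)$ be the universal Cuntz--Pimsner covariant representations, and set $\psi:=k_X\otimes k_Y$ and $\pi:=k_A\otimes k_B$. By the two preliminary lemmas preceding the theorem (which give the Toeplitz condition and the identity $\psi^{(1)}\circ\kappa = \psi^{(1)}_X\otimes\psi^{(1)}_Y$), together with Lemma \ref{CPCovariant}, the pair $(\psi,\pi)$ is a Cuntz--Pimsner covariant representation of $X\otimes Y$; this uses ideal-compatibility to reduce a generic $c\in J_{X\otimes Y}$ to a sum of elementary tensors from $J_X\otimes J_Y$. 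The universal property then produces the desired $*$-homomorphism $F:\mathcal{O}_{X\otimes Y}\to\mathcal{O}_X\otimes\mathcal{O}_Y$.

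Next I would show that $F$ actually lands in $\mathcal{O}_X\otimes_{\mathbb{T}}\mathcal{O}_Y$ and is injective. Let $\{\mathcal{O}_X^n\}$ and $\{\mathcal{O}_Y^n\}$ be the spectral gradings coming from the two gauge actions, and let $S_n:=\overline{\mathrm{span}}\{x\otimes y : x\in\mathcal{O}_X^n,\ y\in\mathcal{O}_Y^n\}$ be the grading of the balanced tensor product produced in Proposition \ref{l1}. Since $k_A\otimes k_B$ maps into $S_0$ and $k_X\otimes k_Y$ into $S_1$, the image of $F$ lies in $\mathcal{O}_X\otimes_{\mathbb{T}}\mathcal{O}_Y$. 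Proposition \ref{ActionFromGrading} applied to $\{S_n\}$ produces a circle action on the balanced tensor product that fixes $\pi(A\otimes B)$ and scales $\psi(X\otimes Y)$ by $z$, i.e., a gauge action for $(\psi,\pi)$. Injectivity of $\pi$ and $\psi$ is inherited from injectivity of the four universal maps, so the gauge-invariant uniqueness theorem gives that $F$ is injective.

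For the surjectivity statement, assume $X$ and $Y$ are Katsura nondegenerate. By Lemma \ref{l3} it suffices to show that $C^*(\psi,\pi)$ contains $S_1$. Elements of $\mathcal{O}_X^1$ are densely spanned by $k_X^{n+1}(x)k_X^n(x')^*$ and similarly for $\mathcal{O}_Y^1$, so a generic generator of $S_1$ has the form $k^{n+1}_X(x)k^n_X(x')^*\otimes k^{m+1}_Y(y)k^m_Y(y')^*$. By symmetry assume $m=n+l$ with $l\geq 0$ and split $y=y_1\otimes y_2$, $y'=y'_1\otimes y'_2$ accordingly. Using $X\cdot J_X = X$ we factor $x=x_0\cdot a$ and $x'=x'_0\cdot a'$ with $a,a'\in J_X$; this is the key structural trick that lets us move the ``excess'' $\mathcal{O}_Y$ factors inside and pair them with elements of $J_X$. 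Applying Cuntz--Pimsner covariance on the $X$-side ($k_A(aa'^*) = k_X^{(1)}(\phi_X(aa'^*))$) and rewriting $k^l_Y(y_2)k^l_Y(y'_2)^* = k_Y^{(1)}(\Theta_{y_2,y'_2})$ identifies the whole expression with $\psi^{n+1}(x_0\otimes y_1)\,\psi^{(1)}\bigl(\kappa(\phi_X(aa'^*)\otimes\Theta_{y_2,y'_2})\bigr)\,\psi^n(x'_0\otimes y'_1)^*$, which manifestly lies in $C^*(\psi,\pi)$.

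I expect the bookkeeping in the last step to be the main obstacle: everything is conceptually clear once the Katsura factorization is in hand, but one has to be careful about the order of the $n+1$ versus $n$-fold internal tensor products on each side and about invoking covariance at the correct level. The ideal-compatibility hypothesis is what makes the covariance step (Lemma \ref{CPCovariant}) work, while Katsura nondegeneracy is exactly the hypothesis that lets us rebalance the $n$'s on the two tensor factors so that the grading-preserving elementary tensors in $S_1$ fall into the image. Together these two hypotheses are precisely what is needed to upgrade the embedding to an isomorphism.
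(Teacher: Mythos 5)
Your proposal is correct and follows essentially the same route as the paper's proof: the same covariant representation $(k_X\otimes k_Y,\,k_A\otimes k_B)$ via Lemma \ref{CPCovariant}, the same use of the grading $\{S_n\}$ and the gauge-invariant uniqueness theorem for injectivity, and the same Katsura-nondegeneracy factorization $x=x_0\cdot a$ with the covariance identity $k_A(aa'^*)=k_X^{(1)}(\phi_X(aa'^*))$ and $k_Y^l(y_2)k_Y^l(y_2')^*=k_Y^{(1)}(\Theta_{y_2,y_2'})$ to capture $S_1$ and conclude surjectivity by Lemma \ref{l3}. No substantive differences or gaps.
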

\section{Examples}
We will now give some examples:
\begin{ex}
Let $(A,\mathbb Z,\alpha)$ and $(B,\mathbb Z,\beta)$ be $C^*$-dynamical systems.
Let $X$ be the $C^*$-correspondence $A_A$ with left action given by
$a\cdot x=\alpha_1(a)x$ and let $Y$ be the correspondence $B_B$ with left
action given by $b\cdot y=\beta_1(b)y$. This action is injective and implemented
by compacts and we have that $\mathcal O_X\cong A\rtimes_\alpha
\mathbb Z$ and $\mathcal O_Y\cong B\rtimes_\beta \mathbb Z$ by isomorphisms
which carry the gauge action of $\mathbb T$ to the dual action of $\mathbb
T$ (see \cite{Pimsner}).

Consider the external tensor product $X\otimes Y$. As a right Hilbert
module this is $A_A\otimes B_B$, it carries a right action of $A\otimes B$
characterized by $(x\otimes y)\cdot(a\otimes b)=x\cdot a\otimes y\cdot b$
but since the right actions on $X$ and $Y$ are given by multiplication in
$A$ and $B$, this action of $A\otimes B$ on $X\otimes Y$ is just multiplication
in $A\otimes B$. Further, the $A\otimes B$ valued inner product on $X\otimes
Y$ is given by 
\begin{align*}
\langle x\otimes y,x'\otimes y'\rangle_{A\otimes B}&=\langle x,x'\rangle_A
\otimes\langle y,y'\rangle_B\\
&=x^*x'\otimes y^*y'\\
&=(x\otimes y)^*(x'\otimes y')
\end{align*}
but this is precisely the inner product on $(A\otimes B)_
{A\otimes B}$. Thus $id_A\otimes id_B$ gives a right Hilbert
module isomorphism $X\otimes Y\cong(A\otimes B)_{A\otimes B}$.
The left action of $A\otimes B$ on $X\otimes Y$ will be the tensor
product of the action of $A$ on $X$ and the action of $B$ on $Y$.
Thus 
\begin{align*}
(a\otimes b)\cdot(x\otimes y)&=\alpha_1(a)x\otimes\beta_1(b)y\\
&=\big(\alpha_1(a)\otimes\beta_1(b)\big)(x\otimes y)
\end{align*}
Thus, as an $A\otimes B$ correspondence, $X\otimes Y$ can be
identified with the correspondence associated to the automorphism
$\alpha_1\otimes\beta_1$ on $A\otimes B$. Since $(\alpha_1\otimes
\beta_1)^{\circ n}=(\alpha_n\otimes\beta_n)$ and $(\alpha_1\otimes
\beta_1)^{-1}=(\alpha_{-1}\otimes\beta_{-1})$, the action of $\mathbb
Z$ generated by $\alpha_1\otimes\beta_1$ will be the diagonal action
$\alpha\otimes\beta$ of $\mathbb Z$ on $A\otimes B$. Thus we have that
$\mathcal O_{X\otimes Y}\cong(A\otimes B)\rtimes_{\alpha\otimes\beta}
\mathbb Z$.

Therefore, in this context our main theorem says that
$$(A\otimes B)\rtimes_{\alpha\otimes\beta}\mathbb Z\cong
(A\rtimes_\alpha\mathbb Z)\otimes_\mathbb T(B\rtimes_\beta
\mathbb Z)$$
In later work, we hope to investigate whether this result generalizes
to groups other than $\mathbb Z$.
\end{ex}
\begin{ex} [Products of Topological Graphs]
Let $E=(E^0,E^1,r,s)$ and $F=(F^0,F^1,r',s')$ be source-free 
topological graphs with $r$ and $r'$ proper. 
Then the left actions of $X(E)$ and $X(F)$ will be
injective and implemented by compacts. Recall from Example \ref{TensorTopGraph}
that $X(E)\otimes X(F)\cong X(E\times F)$ where $E\times F$ is the product
graph. Our main result says that $\mathcal O_{X(E\times F)}\cong
\mathcal O_{X(E)}\otimes_\mathbb T\mathcal O_{X(F)}$ which translates
to $$C^*(E\times F)\cong C^*(E)\otimes_\mathbb T C^*(F)$$ Note that if $E$
and
$F$ are discrete graphs, this coincides with Kumjian's result in \cite{Kum}.
\end{ex}
\begin{ex}(Products of Discrete Graphs)
Let $E$ and $F$ be discrete graphs with no proper sources and such that
no infinite receiver emits an edge. From the discussion in Section 4 we know
that the graph correspondences $X(E)$ and $X(F)$
are ideal compatible and Katsura nondegenerate. By the same reasoning as
in the previous example we have that $$C^*(E\times F)\cong C^*(E)\otimes_\mathbb
T C^*(F)$$
Note that this stronger than the result in \cite{Kum} where the graphs
are required to be source-free and row-finite.
\end{ex}
\begin{ex}
Let $A$ and $B$ be $C^*$-algebras, and let 
$X$ be a correspondence over $A$. Viewing $B$ as the correspondence
$\tensor[_B]B{_B}$ we can form the $A\otimes B$ correspondence $X\otimes
B$. Suppose that $X$ and $B$ are ideal compatible and Katsura nondegenerate
(in fact $B$ will automatically be Katsura nondegenerate).
Recall that $\mathcal O_B\cong B\otimes C(\mathbb T)$ with gauge action
$\iota\otimes\lambda$ where $\iota$ is the trivial action and $\lambda$ is
left translation. Thus our main result says that $\mathcal O_{X\otimes B}
\cong\mathcal O_X\otimes_\mathbb T (B\otimes C(\mathbb T))$. Identifying
$C(\mathbb T)$ with $C^*(\mathbb Z)$ we have $\mathcal O_{X\otimes B}
\cong\mathcal O_X\otimes_\mathbb T (B\otimes C^*(\mathbb Z))$ and characterizing
the $\mathbb T$ balanced tensor product in terms of the $\mathbb Z$-gradings
as we have been, we see that
$$\mathcal O_{X\otimes B}\cong\overline{span}\{x\otimes b\otimes w\in
\mathcal O_X^n\otimes B\otimes C^*(\mathbb Z)^n:n\in\mathbb Z\}$$
But since $C^*(\mathbb Z)^n=span(u_n)$ (where $u_n$ denotes the unitary
in $C^*(Z)$ associated to $n$) we can rephrase this as
\begin{align}
\overline{span}\{x\otimes b\otimes u_n\in
\mathcal O_X\otimes B\otimes C^*(\mathbb Z):x\in\mathcal O_X^n\}\label{set1}
\end{align}
Now, let $\gamma$ denote the gauge action of $\mathbb T$ on $\mathcal O_X$
and let $\delta^\gamma$ be the dual coaction of $\mathbb Z$. Recall that
this coaction can be characterized by the property that $\delta^\gamma(x)
=x\otimes u_n$ whenever $x\in \mathcal O_X^n$. Since the subspaces
$\mathcal O_X^n$ densely span $\mathcal O_X$, their images under
$\delta^\gamma$ will densely span $\delta^\gamma(\mathcal O_X)$.
Therefore: 
\begin{align*}
\delta^\gamma(\mathcal O_X)&=\overline{span}\{x\otimes u_n:x\in \mathcal
O_X^n\}\\
(\delta^\gamma\otimes id_B)(\mathcal O_X\otimes B)&=\overline{span}\{x\otimes
u_n\otimes b:x\in \mathcal O_X^n, b\in B\}\\
\sigma_{23}\circ(\delta^\gamma\otimes id_B)(\mathcal O_X\otimes B)&=\overline{span}\{x\otimes
b\otimes u_n:x\in \mathcal O_X^n, b\in B\}
\end{align*}
Noting that
$id_B$, and $\sigma_{23}$ are isomorphisms (where $\sigma_{23}$ is the
map which exchanges the second and third tensor factors) and $\delta^\gamma$
is an injective $*$-homomorphism, we see that 
$\sigma_{23}\circ(\delta^\gamma\otimes id_B)(\mathcal O_X\otimes B)$ is an
injective $*$-homomorphism and is thus an isomorphism onto its image.
But its image is $\overline{span}\{x\otimes
b\otimes u_n:x\in \mathcal O_X^n, b\in B\}$ and by (\ref{set1}) this
is isomorphic to $\mathcal O_{X\otimes B}$. Therefore we have shown that:
$$\mathcal O_{X\otimes B}\cong\mathcal O_X\otimes B$$
This result is already known, and was used in \cite{Quigg2} to prove
facts about coactions on Cuntz-Pimsner algebras.
\end{ex}
\begin{ex}
Given a $C^*$-algebra $A$ and a completely positive map $\Phi$, Kwasniewski
defines (\cite{KWASNIEWSKI} Definition 3.5) a crossed product of $A$ by $\Phi$
denoted by $C^*(A,\Phi)$. In Theorem 3.13 of \cite{KWASNIEWSKI}, it is shown
that $C^*(A,\Phi)\cong\mathcal O_{X_\Phi}$ where $X_\Phi$ is the correspondence
associated with $\Phi$ as in Definition \ref{CPCor}. If $\Phi$ is an endomorphism
this reduces to the Exel crossed product \cite{Exel}. 

Suppose $A$ and $B$ are $C^*$-algebras and $\Phi:A\to A$ and $\Psi:B\to B$
are completely positive maps. Furthermore, suppose that the associated
correspondences $X_\Phi$ and $X_\Psi$ are ideal compatible and Katsura 
nondegenerate. Then our main
result states that $\mathcal O_{X_\Phi\otimes X_\Psi}\cong \mathcal O_{X_\Phi}
\otimes_\mathbb T \mathcal O_{X_\Psi}$. Recalling that $X_\Phi\otimes X_\Psi
\cong X_{\Phi\otimes\Psi}$ and using the crossed product notation, we get
$$C^*(A\otimes B,\Phi\otimes\Psi)\cong C^*(A,\Phi)\otimes_\mathbb TC^*(B,\Psi)$$
\end{ex}

\providecommand{\bysame}{\leavevmode\hbox to3em{\hrulefill}\thinspace}
\providecommand{\MR}{\relax\ifhmode\unskip\space\fi MR }
\providecommand{\MRhref}[2]{%
  \href{http://www.ams.org/mathscinet-getitem?mr=#1}{#2}
}
\providecommand{\href}[2]{#2}



\begin{thebibliography}{EKQR02}

\bibitem[Bla06]{Blackadar}
B.~Blackadar, \emph{Operator algebras: theory of {C}*-algebras and von
  {N}eumann algebras}, Springer Science \& Business Media, 2006.

\bibitem[EKQR02]{enchilada}
S.~Echterhoff, S.~Kaliszewski, J.~Quigg, and I.~Raeburn, \emph{A categorical
  approach to imprimitivity theorems for {C}*-dynamical systems}, Dynamical
  Systems \textbf{180} (2002).

\bibitem[Exe08]{Exel}
Ruy Exel, \emph{A new look at the crossed product of a {$C\sp \ast$}-algebra},
  Ergodic Theory Dynam. Systems \textbf{28} (2008), no.~3, 749--789.

\bibitem[Gaa73]{Gaal}
S.~Gaal, \emph{Linear analysis and representation theory}, Springer-Verlag, New
  York-Heidelberg, 1973, Die Grundlehren der mathematischen Wissenschaften,
  Band 198.

\bibitem[Kat04]{Katsura1}
T.~Katsura, \emph{On {C}*-algebras associated with {C}*-correspondences},
  Journal of Functional Analysis \textbf{217} (2004), no.~2, 366--401.

\bibitem[KQR12]{Quigg2}
S~Kaliszewski, J~Quigg, and D~Robertson, \emph{Coactions on cuntz-pimsner
  algebras}, arXiv preprint arXiv:1204.5822 (2012).

\bibitem[Kum98]{Kum}
A.~Kumjian, \emph{Notes on {C}*-algebras of graphs}, Contemporary Mathematics
  \textbf{228} (1998), 189--200.

\bibitem[Kwa14]{KWASNIEWSKI}
BK~Kwa{\'s}niewski, \emph{Exel's crossed product and crossed products by
  completely positive maps}, arXiv preprint arXiv:1404.4929 (2014).

\bibitem[Lan95]{Lance}
E.~C. Lance, \emph{Hilbert {C}*-modules a toolkit for operator algebraists},
  London Mathematical Society Lecture Note Series, vol. 210, Cambridge
  University Press, Cambridge, 1995.

\bibitem[Pim01]{Pimsner}
M.~V. Pimsner, \emph{A class of {C}*-algebras generalizing both
  {C}untz-{K}rieger algebras and crossed products by {Z}}, Fields Inst. Commun.
  \textbf{12} (2001), 189--212.

\bibitem[Qui96]{Quigg}
J.~C. Quigg, \emph{Discrete {C}*-coactions and {C}*-algebraic bundles}, Journal
  of the Australian Mathematical Society (Series A) \textbf{60} (1996), no.~02,
  204--221.

\bibitem[Rae05]{GraphAlgebraBook}
I.~Raeburn, \emph{Graph algebras}, Regional Conference Series in Mathematics,
  no. 103, American Mathematical Society, 2005.

\end{thebibliography}
\end{document}